\documentclass[12pt]{article}
\usepackage{graphicx}
\usepackage{epstopdf}
\usepackage{float}
\usepackage{color}
\usepackage{amssymb}
\usepackage{algorithm}
\usepackage{algorithmic}

\usepackage{mathtools}
\usepackage[thmmarks,amsmath]{ntheorem}

\usepackage{bm}

\usepackage{extarrows}
\usepackage[hang,flushmargin]{footmisc} 

\usepackage{mathrsfs} 
\usepackage[font=footnotesize,skip=0pt,textfont=rm,labelfont=rm]{caption,subcaption}

\usepackage{booktabs}
\usepackage{tocloft}
\usepackage{hyperref}
\hypersetup{hypertex=true,
	colorlinks=true,
	linkcolor=blue,
	anchorcolor=blue,
	citecolor=blue}
\usepackage{caption}
\usepackage{cite}

{
	\theoremstyle{nonumberplain}
	\theoremheaderfont{\bfseries}
	\theorembodyfont{\normalfont}
	\theoremsymbol{\mbox{$\Box$}}
	\newtheorem{proof}{Proof}
}


\usepackage{theorem}
\newtheorem{claim}{Claim}

\newtheorem{theorem}{Theorem}[section]

\newtheorem{lemma}{Lemma}[section]

\newtheorem{case}{Case}

\newcommand{\Rmnum}[1]{\uppercase\expandafter{\romannumeral #1}}

{
	\theoremheaderfont{\bfseries}
	\theorembodyfont{\rm}
	
}
\graphicspath{{figure/}}                                 
\usepackage[a4paper]{geometry}                           
\begin{document}
	\title{\textbf{Resolving Two Open Problems of Planar $B_k$-CPG Recognition: $k=0,1$}}
	
	\author{Bin Sun$^{1}$, Shou-Jun Xu$^{1}$, Yu Yang$^{2,\dagger}$\\
	{\rmfamily\small $^{1}$School of Mathematics and Statistics, Lanzhou University, Lanzhou, 730000, PR China }\\
	{\rmfamily\small $^{2}$School of Mathematics and Statistics, Huaibei Normal University, Huaibei, 235000, PR China}\\
}

\renewcommand{\thefootnote}{\fnsymbol{footnote}}

\footnotetext[2]{Corresponding author. E-mail address: yyang@chnu.edu.cn (Y. Yang).}
	
	\date{}

	\maketitle
    
	\textbf{Abstract:} A $k$-bend path is a non-self-intersecting polyline that lies on a grid and consists of at most $k+1$ axis-parallel line segments. A $B_k$-CPG graph is a graph whose vertices can be represented by pairwise interiorly disjoint $k$-bend paths on a grid such that two vertices are adjacent if and only if the corresponding grid paths touch at a grid point. We prove that recognizing planar $B_0$-CPG graphs of maximum degree 8 is NP-complete, and that recognizing planar $B_1$-CPG graphs of maximum degree 11 is NP-complete. These results settle two of the three planar recognition problems left open by Champseix, Galby, Munaro, and Ries.
	\vspace{1cm}	
	
\textbf{Keywords:} $B_0$-CPG graph; $B_1$-CPG graph; planar graph; recognition; NP-complete

\section{Introduction}
\par Intersection graphs of various types of objects have been extensively studied in the last sixty years. A $k$-bend path is a non-self-intersecting polyline that lies on a grid and consists of at most $k+1$ axis-parallel line segments. For 1-bend paths, there are six possible shapes: $\llcorner$, $\lrcorner$, $\ulcorner$, $\urcorner$, $\shortmid$, and $\text{-}$. The {\em Vertex intersection graph of Paths on a Grid} ({\em VPG graph}) was introduced in \cite{asinowski2012vertex}. In this representation, the vertices can be represented by paths on a grid such that two vertices are adjacent if and only if the corresponding paths intersect on at least one grid point. If each path on a grid has at most $k$ bends, then it is a {\em $B_{k}$-VPG graph}. 
\par A notion closely related to the VPG graph is that of the {\em CPG graph} which is a VPG graph such that the path interiors are pairwise disjoint, and two vertices are adjacent if and only if the corresponding paths touch at a grid point. Similarly to VPG graphs, a {\em $B_{k}$-CPG graph} is a CPG graph admitting a representation in which each path is a $k$-bend path. Clearly, every $B_{k}$-CPG graph is a $B_{k}$-VPG graph.

\par Golumbic et al. \cite{golumbic2009edge} introduced the {\em Edge intersection graph of Paths on a Grid} ({\em EPG graph}) whose vertices can be represented by paths on a grid such that two vertices are adjacent if and only if the corresponding paths share at least one grid edge.

\par VPG and EPG graphs were introduced to study electrical networks \cite{sinden1966topology} and have practical applications in areas such as circuit layout, where wires can be modeled as paths on a grid.

\par Gon{\c{c}}alves et al. \cite{gonccalves2018planar} proved a conjecture proposed in \cite{chaplick2012planar}: all triangle-free planar graphs are $B_{1}$-CPG with shape of $\llcorner$, $\shortmid$, or $\text{-}$. Chaplick et al. \cite{chaplick2012bend} proved that deciding whether a given $B_{k+1}$-VPG graph belongs to $B_{k}$-VPG is NP-complete. Deniz et al. \cite{deniz2018contact} showed that CPG graphs need not be planar and that not every planar graph is CPG. Within restricted CPG, its recognition problem can be more tractable. Bonomo-Braberman et al. \cite{bonomo2022some} obtained forbidden-induced-subgraph characterizations within several classes and a polynomial-time recognition algorithm for chordal graphs. Bonomo-Braberman et al. \cite{bonomo2020characterising} established a characterization and a recognition algorithm within circular-arc graphs. A number of algorithmic results concerning subclasses of VPG and EPG have also been obtained in \cite{silva2025clique,epstein2013approximation,wang2025algorithm,bose2022computing,galby2020approximating}. These results impose additional structural restrictions and do not determine the complexity on arbitrary planar input graphs.

\par Champseix et al. \cite{Champseix2021cpg} proved that recognizing $B_k$-CPG graphs is NP-complete for every fixed $k\geq1$. Their planar hardness result applies to every fixed $k\geq3$, and they also proved NP-completeness of recognizing planar CPG graphs without a prescribed bend bound. They explicitly left the planar cases $k=0,1,2$ open~\cite{Champseix2021cpg}. Their structural results further show that the bend hierarchy remains strict within planar graphs. Consequently, neither the inclusion $B_k\text{-CPG}\subseteq B_{k+1}\text{-CPG}$ nor hardness at a larger bend bound settles recognition at a smaller one.

\par The paper is organized as follows. Section 2 introduces terminology and preliminary results. Section 3 proves that recognizing planar $B_0$-CPG graphs is NP-complete. Section 4 proves that recognizing planar $B_1$-CPG graphs is NP-complete.

\section{Preliminaries}
\par In this paper, all graphs are simple, i.e., they contain no loops or multiple edges. Let $G=(V(G),E(G))$ be a graph. We denote by $N(v)$ the {\em open neighborhood} of a vertex $v\in V(G)$, and $N[v]=N(v)\cup\{v\}$ is the {\em closed neighborhood} of $v$. The {\em degree} of $v$ is denoted by $d(v)=|N(v)|$. Let $\Delta(G)$ and $\delta(G)$ denote {\em the maximum degree} and {\em minimum degree} of $G$, respectively. A graph $G'$ is an {\em induced subgraph} of $G$ if $V(G')\subseteq V(G)$ and $E(G')=\{uv\mid uv\in E(G)\text{ and }u,v\in V(G')\}$. A graph $G$ is a {\em subcubic graph} if $\Delta(G)\leq 3$.

\par We denote by $P_v$ the path representing a vertex $v$. For a path $P_v$, its endpoint set is $\partial{P_v}$, its interior is $P_v^{o}=P_v\setminus\partial{P_v}$, and its number of bends is $b(P_v)$. In particular, a bend belongs to the interior of the path. A {\em segment} is a maximal straight portion of a grid path. A point is a {\em free endpoint} of a representation if it is the endpoint of one path and belongs to no other path. Freeness is always relative to the representation under consideration; deleting paths can create free endpoints. Restricting a representation to a vertex set gives a representation of the induced subgraph. At a contact point, distinct paths cannot use the same incident grid direction. A path passing through the point uses two directions, whether or not it bends there. When we speak of an open ray from a point $p$, the ray does not contain $p$.
\par In every figure depicting a $B_k$-CPG representation, each path is drawn with an arrowhead at each of its two endpoints.

\begin{lemma}\label{le2.1}
For every fixed $k\geq0$, $B_k$-CPG recognition belongs to NP. Every representable $n$-vertex graph has a certificate with coordinates in $\{1,\ldots,(k+2)n\}$ and total bit length $O(n\log n)$.
\end{lemma}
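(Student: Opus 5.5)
The plan is coordinate compression. Take any $B_k$-CPG representation of an $n$-vertex graph $G$. Each path $P_v$ is a polyline with at most $k$ bends, so it has at most $k+2$ \emph{corner points}: the two endpoints and at most $k$ bend points. Every such point has an $x$-coordinate and a $y$-coordinate. Let $X$ be the set of all $x$-coordinates of corner points over all paths, and define $Y$ analogously. Then $|X|,|Y|\le (k+2)n$.

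Let $\phi_X$ and $\phi_Y$ be the unique order-preserving bijections from $X$ and $Y$ onto initial segments of $\{1,2,\dots\}$. Replace every corner point $(x,y)$ by $(\phi_X(x),\phi_Y(y))$, and reconnect consecutive corners of each path by axis-parallel segments. This is well defined because consecutive corners of a path share an $x$- or a $y$-coordinate, and $\phi_X,\phi_Y$ preserve equality.

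The main step is to show that the new family is again a valid contact representation of the same graph. Every segment is horizontal, at some $y\in Y$, spanning an interval with endpoints in $X$, or vertical with the symmetric description. Two segments of different paths meet in one of three ways.
\begin{itemize}
\item[(a)] Collinear overlap. This is determined by equality of the common coordinate and the order relations among the interval endpoints.
\item[(b)] A horizontal and a vertical segment crossing or touching. This is determined by the order relations of the vertical segment's $x$ against the horizontal interval's endpoints, and of the horizontal segment's $y$ against the vertical interval's endpoints.
\item[(c)] Parallel segments with distinct common coordinates. These never meet.
\end{itemize}
In every case, whether the intersection is empty, a single point, or a nondegenerate interval depends only on the relative order (with ties) of coordinates in $X\cup Y$. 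Whether an intersection point is an endpoint or a bend of a given path, or lies in its interior, is likewise preserved. Since $\phi_X,\phi_Y$ are strictly monotone, all these relations are preserved.

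It follows that, after compression:
\begin{itemize}
\item paths remain non-self-intersecting;
\item interiors stay pairwise disjoint;
\item two paths touch if and only if they touched before;
\item bend counts are unchanged, since collinearity of three consecutive corners is preserved.
\end{itemize}
Hence the compressed family represents exactly $G$, on the grid $\{1,\dots,(k+2)n\}^2$. I expect this to be the only delicate point: one must argue carefully that every relevant predicate is order-determined, including the condition that a touching point is not interior to both paths.

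Membership in NP then follows. The certificate lists, for each vertex, its at most $k+2$ corner points in order. That is $O(kn)$ numbers, each of $O(\log((k+2)n))$ bits, for $O(n\log n)$ bits in total since $k$ is fixed. The verifier performs the following checks:
\begin{itemize}
\item consecutive corners are axis-aligned and each path has at most $k$ bends;
\item each path is non-self-intersecting;
\item for each pair of paths, by checking all $O(k^2)$ segment pairs: their interiors are disjoint, and they share a point exactly when the pair is an edge of $G$.
\end{itemize}
These checks take polynomial time, and together they place $B_k$-CPG recognition in NP.
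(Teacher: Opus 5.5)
Your proposal is correct and is essentially the paper's argument. The paper likewise rank-compresses the at most $(k+2)n$ corner coordinates on each axis, notes that every intersection, incidence, simplicity and bend predicate depends only on the order (with ties) of these coordinates, and verifies the compressed certificate in polynomial time. Your case split (a)--(c) makes explicit what the paper states in one line; the only item to add to your verifier is the paper's explicit check that each path is nontrivial, which compression trivially preserves.
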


\begin{proof}
There are at most $(k+2)n$ endpoints and bends. Replace each distinct horizontal coordinate by its rank among all horizontal coordinates, and do the same for the vertical coordinates. Strict order and equality in each coordinate are preserved. Hence segment intersections, endpoint incidences, overlaps, and non-intersections are preserved, as are the simplicity of each path and its bend count. Each coordinate now uses $O(\log n)$ bits. A verifier checks that the segments are axis-parallel, that each path is nontrivial and simple, that it has at most $k$ bends, that interiors of different paths are disjoint, and that the contact graph is the given graph. For fixed $k$, pairwise segment tests require polynomial time.
\end{proof}

\par An {\em exposed interior point} of a represented path is a point in the interior of a straight segment which belongs to no other path and from which some perpendicular open ray avoids all represented paths. We also say that a free endpoint is {\em exposed} if the open ray continuing its terminal segment outwards avoids all paths.

\begin{lemma}\label{le2.2}
\par Fix $k\geq0$, a graph $D$, and a set $S\subseteq V(D)$. Let $J$ be a $B_k$-CPG graph with a fixed vertex $s$ such that, in every $B_k$-CPG representation of $J$, $P_s$ has no free endpoint. 
\par Suppose $J$ has a $B_k$-CPG representation with an exposed interior point on $P_s$. Attach a copy of $J$ to each $t\in S$ by the edge $ts$. The resulting graph $G$ is $B_k$-CPG if and only if $D$ has a $B_k$-CPG representation in which each $P_t$, $t\in S$, has a free endpoint.
\par If $D$ and $J$ are planar, the above operation preserves planarity.
\end{lemma}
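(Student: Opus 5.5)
We prove the two directions of the equivalence separately; the planarity claim is immediate. Throughout, $J_t$ denotes the copy of $J$ attached to $t\in S$, and $s_t$ its copy of $s$.

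\textbf{($\Rightarrow$)} Suppose $G$ has a $B_k$-CPG representation. Restricting it to $V(D)$ gives a representation of the induced subgraph $D$; we show that each $P_t$, $t\in S$, has a free endpoint there.
\begin{enumerate}
\item Restricting to $V(J_t)$ gives a representation of $J$. By hypothesis, $P_{s_t}$ has no free endpoint in it. Hence each endpoint of $P_{s_t}$ lies on another path of $J_t$.
\item The contact $ts_t$ occurs at a point $p$. Since $P_t$ and $P_{s_t}$ have disjoint interiors, $p$ is an endpoint of $P_t$ or an endpoint of $P_{s_t}$.
\item Suppose $p$ is an endpoint of $P_{s_t}$ but not of $P_t$. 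By step 1, some other path $P_x$ of $J_t$ also contains $p$. Then $p$ lies in the interior of $P_t$, so $P_x$ cannot meet $p$ interiorly and must end at $p$. This makes $t$ adjacent to $x$, a contradiction. So $p$ is an endpoint of $P_t$.
\item Any path $P_y$ with $y\in V(D)$ that contains $p$ would touch $P_{s_t}$, giving an edge $ys_t$, which is absent. Hence $p$ lies on no other path of $D$, and $p$ is a free endpoint of $P_t$ in the restriction.
\end{enumerate}
This case analysis at $p$ is the one point needing care: why $p$ must be an endpoint of $P_t$.

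\textbf{($\Leftarrow$)} Fix a representation of $D$ in which each $P_t$, $t\in S$, has a free endpoint $q_t$. Fix also a representation $R$ of $J$ with an exposed interior point $e$ on $P_s$, witnessed by a perpendicular open ray $r$ that avoids $R$.
\begin{enumerate}
\item Refine the grid of the $D$-representation by scaling it up, so that around each $q_t$ there is a small empty neighbourhood meeting only the terminal segment of $P_t$.
\item Extend $P_t$ straight beyond $q_t$ into this empty region. This adds no bend, and we may assume $q_t$ itself is exposed after a short extension.
\item Scale $R$ down so it fits in a small box. Rotate or reflect it so that $r$ points back along $P_t$'s terminal direction toward $q_t$.
\item Translate the box so that $P_t$, extended along $r$, ends exactly at $e$. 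The extension has no bend and avoids $R$ because $r$ avoids all paths of $R$.
\end{enumerate}
Now $P_t$ touches only $P_s$, at the point $e$. That point is an endpoint of $P_t$, is interior to $P_s$, and lies on no other path. Since $e$ belongs to no other path of $R$, no extra adjacency is created.

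Different copies sit in disjoint small boxes and create no other contacts, so this is a $B_k$-CPG representation of $G$. The technical obstacle is making the "empty neighbourhood" step precise on a grid. We handle it by scaling all coordinates by a large factor, which preserves contacts, and using Lemma~\ref{le2.1}-style coordinate reasoning.

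\textbf{Planarity.} If $D$ and $J$ are planar, $G$ is planar. Its blocks lie in $D$, in the copies of $J$, or are the bridges $ts$, and a graph is planar exactly when all its blocks are planar.
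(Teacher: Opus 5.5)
Your proof is correct and follows essentially the same route as the paper. The forward direction is the same contact analysis: the edge $ts$ must be realized at an endpoint of $P_t$ that lies on no other path of $D$. The converse likewise places a scaled, rotated copy of the representation of $J$ inside an empty neighbourhood of the free endpoint, with its empty ray along the terminal segment of $P_t$. Your collinear extension of $P_t$ to the exposed point is a harmless variant of putting that point exactly at $q_t$, and your block-decomposition argument for planarity is an equally standard substitute for the paper's local gluing.

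One small slip: a short extension does not make $q_t$ exposed in the paper's sense, since that requires the whole outward ray to avoid all paths. You never use this, however, because all your modifications stay inside the empty neighbourhood.
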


\begin{proof}
\par Consider a $B_k$-CPG representation of $G$. An endpoint of $P_s$ belongs to a path $P_x$ for some $x\neq s$ within the copy of $J$. Thus $P_t$ cannot contain that endpoint: it would create the forbidden edge $tx$. The edge $ts$ must use an endpoint of $P_t$ in the interior of $P_s$. Since $s$ is adjacent to no vertex of $V(D)\setminus\{t\}$, this endpoint cannot belong to a path of $D$ other than $P_t$. Deleting all copies of $J$ therefore leaves a free endpoint on each path of $S$.

\par Conversely, start with a $B_k$-CPG representation of $D$ having the stated free endpoints. Around each chosen endpoint, take a sufficiently small neighborhood which meets only the terminal segment of its path; choose these neighborhoods pairwise disjoint. Scale and rotate the fixed representation of $J$ into the neighborhood so that its exposed interior point is the chosen endpoint and its empty ray contains the incoming terminal segment. The new representation creates exactly the edge $ts$. No existing path changes its number of bends.
\par Each copy is joined to $D$ by one edge. Standard planar gluing operations apply: draw the copy in a small disk incident with the attachment vertex, taking that vertex on the outer face of the copy.
\end{proof}

\section{Zero-bend CPG recognition}
\par In this section, we show that \textsc{Recognition} is NP-complete for planar $B_0$-CPG graphs, via a reduction from the \textsc{HV-Rectilinear Planarity} problem.

\begin{lemma}\label{le3.1}
In a $B_0$-CPG representation, the segments of a triangle have a unique common point. The segments of a $K_4$ also have a unique common point, and this point is an endpoint of all four segments. In particular, the diamond $K_4-e$ is not $B_0$-CPG.
\end{lemma}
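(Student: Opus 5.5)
We argue directly from the contact rules for $0$-bend paths, which are horizontal or vertical segments with pairwise disjoint interiors, adjacent exactly when they share a point. The basic observation concerns two touching segments $P_u,P_v$. Their common point $p$ must be an endpoint of at least one of them, since interiors are disjoint. If they are parallel and collinear, $p$ is an endpoint of both. If they are perpendicular, the intersection is the single point $p$. We also use the fact noted in the preliminaries: at a contact point distinct paths cannot use the same incident grid direction. A point therefore has four directions available, an endpoint uses one, and an interior (straight) point uses two.

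\textbf{Triangle.} Let $P_a,P_b,P_c$ pairwise touch. We first rule out two parallel segments among them, say $P_a,P_b$ both horizontal. If they lie on different lines, they cannot touch. So they are collinear and meet at a common endpoint $p$, with their interiors on opposite sides of $p$. Now $P_c$ must meet both. If $P_c$ is horizontal, it is collinear with them and must touch both while having an interior disjoint from theirs; this is impossible, since the union of $P_a$ and $P_b$ is a segment and $P_c$ could only touch it at its outer ends, which belong to one path each. If $P_c$ is vertical, it meets the horizontal line in one point, which must lie in $P_a\cap P_b=\{p\}$; so all three pass through $p$. In the remaining case, where no two are parallel, two of the three are parallel by pigeonhole, so this case is already covered. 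Hence all three meet at a common point $p$. Uniqueness is immediate because $P_a\cap P_b$ is a single point: collinear touching segments meet only at a shared endpoint, and perpendicular segments meet in one point.

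\textbf{$K_4$.} Applying the triangle result to each of the four triangles, every triple has a unique common point. Any two triples share a pair, and each pair meets in exactly one point, so all four common points coincide in a single point $p$ lying on all four segments. The four segments use disjoint sets of directions at $p$, and there are only four directions. Each segment uses at least one direction, so each uses exactly one, which means $p$ is an endpoint of every segment.

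\textbf{Diamond.} Let $K_4-e$ have vertices $a,b,c,d$ with $cd$ missing. The triangles $abc$ and $abd$ have common points $p$ and $q$, both lying in $P_a\cap P_b$, which is a single point. Hence $p=q$, and $P_c$ and $P_d$ both contain $p$, which forces the edge $cd$, a contradiction. The main care needed is the parallel case of the triangle argument, specifically excluding three collinear segments.
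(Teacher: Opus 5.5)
Your proof is correct, but it reaches the common point of a triangle by a different route from the paper's.

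\textbf{The paper's route.} The paper treats each segment as a product of two closed intervals. Pairwise intersection of the segments then gives pairwise intersecting $x$-intervals and pairwise intersecting $y$-intervals. The one-dimensional Helly property supplies a common coordinate on each axis, and hence a common point. Interior-disjointness is used only for uniqueness. For $K_4$ the paper simply reruns the same argument on four segments.

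\textbf{Your route.} You use pigeonhole on the two orientations and show that the two parallel segments must be collinear with a shared endpoint $p$. Interior-disjointness then excludes a collinear third segment and forces the third segment to be perpendicular through $p$. For $K_4$ you merge the triangle points using the uniqueness of pairwise intersections.

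\textbf{Comparison.} The Helly argument is shorter and more general. It gives a common point for any clique of axis-parallel segments, even without interior-disjointness (as in $B_0$-VPG). Your argument depends on the contact setting, but it yields extra structure for free: in every triangle, exactly two segments are collinear and end at the common point, and the third is perpendicular to them. The direction count for $K_4$ and the diamond argument are the same as in the paper.

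One small tightening: in the collinear case, say explicitly that $P_c$ misses the open segment spanned by $P_a\cup P_b$. Being connected, it can then reach at most one of the two outer ends.
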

\begin{proof}
\par Let $T$ be a triangle whose three edges are represented by axis-parallel segments $P_{v_1}, P_{v_2}, P_{v_3}$ in a $B_0$-CPG representation. Each segment is the Cartesian product of two closed intervals, at least one of which is a singleton. Since the edges of $T$ are pairwise adjacent, the segments $P_{v_1}, P_{v_2}, P_{v_3}$ are pairwise intersecting, so their horizontal coordinate intervals are pairwise intersecting and likewise for their vertical coordinate intervals. For closed intervals, pairwise intersection implies a common point: the largest left endpoint does not exceed the smallest right endpoint. Applying this fact in both coordinates yields a point $p$ common to all three segments. This point is unique: two distinct segments sharing two points would share a non-degenerate common subsegment, contrary to interior disjointness. Hence the segments of $T$ share a unique common point.
\par The same argument shows that the four segments of a $K_4$ share a unique common point. If one segment passed through this point, it would use two grid directions. The other three segments would each need a different unused direction, but only two would remain. Hence all four segments end at the point and occupy its four directions.

\par The two triangles of a diamond share an edge. Their common points must coincide, by uniqueness of the intersection of the two segments representing that edge. All four segments would then meet at one point, creating the missing edge. This proves the last assertion.
\end{proof}

\par To {\em attach a private $K_4$ at $v_1$}, add three vertices which, together with $v_1$, induce a $K_4$, and give the three new vertices no other neighbors. By Lemma \ref{le3.1}, this operation occupies one endpoint of $P_{v_1}$. Two private copies occupy different endpoints: if their common points coincided, vertices in different copies would become adjacent. A segment not adjacent to the three private vertices cannot contain the occupied endpoint.

\begin{lemma}\label{le3.2}
Let $v_1,v_2,v_3$ induce a triangle, and attach two private copies of $K_4$ at $v_1$. In every $B_0$-CPG representation of the resulting graph, $P_{v_2}$ and $P_{v_3}$ are collinear and share a common endpoint, while $P_{v_1}$ passes perpendicularly through that point.
\end{lemma}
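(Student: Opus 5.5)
By Lemma~\ref{le3.1}, the triangle $v_1v_2v_3$ has a unique common point $p$ that lies on all three segments $P_{v_1},P_{v_2},P_{v_3}$. I will first show that $p$ is an interior point of $P_{v_1}$, then read off the geometry of $P_{v_2}$ and $P_{v_3}$ from the directions available at $p$.

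By the remark after the definition of attaching a private $K_4$, the two private copies occupy the two distinct endpoints of $P_{v_1}$. Call these points $q_1,q_2$, so $\partial P_{v_1}=\{q_1,q_2\}$. Each $q_i$ is the common point of the four segments of a $K_4$ containing $v_1$, and by Lemma~\ref{le3.1} all four directions at $q_i$ are used by $P_{v_1}$ and the three private vertices. A segment not adjacent to those private vertices therefore cannot contain $q_i$. Since $v_2$ and $v_3$ have no neighbors among the private vertices, neither $P_{v_2}$ nor $P_{v_3}$ contains $q_1$ or $q_2$. Hence $p\neq q_1,q_2$, and because $p\in P_{v_1}$, the point $p$ lies in the interior of $P_{v_1}$. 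So $P_{v_1}$ passes straight through $p$ and uses both directions of one axis there, say the two vertical directions.

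Next consider $P_{v_2}$ and $P_{v_3}$ at $p$. Interiors are pairwise disjoint, so $p$ cannot be interior to $P_{v_2}$: it is already interior to $P_{v_1}$. Likewise $p$ is not interior to $P_{v_3}$. Thus $p$ is an endpoint of both $P_{v_2}$ and $P_{v_3}$. Each of them leaves $p$ in a single grid direction, and by the remark in the preliminaries distinct paths cannot use the same incident direction at a contact point. The two vertical directions are taken by $P_{v_1}$, so $P_{v_2}$ and $P_{v_3}$ must leave $p$ in the two opposite horizontal directions. Since both are straight segments, they are collinear, they share the common endpoint $p$, and $P_{v_1}$ is perpendicular to them through $p$.

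The only delicate step is the exclusion $p\notin\{q_1,q_2\}$. It relies on the fact that the two private copies sit at distinct endpoints of $P_{v_1}$ and saturate all directions there. Both facts are supplied by Lemma~\ref{le3.1} and the subsequent remark, so I expect no real obstacle.
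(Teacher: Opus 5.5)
Your proposal is correct and follows the paper's proof essentially step for step. Both arguments take the common point of the triangle from Lemma~\ref{le3.1}, use the two private $K_4$'s to rule out both endpoints of $P_{v_1}$, and then assign the two remaining opposite directions to $P_{v_2}$ and $P_{v_3}$. Your explicit appeal to interior disjointness, to show that $p$ must be an endpoint of $P_{v_2}$ and $P_{v_3}$, spells out a step the paper only asserts.
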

\begin{proof}
The triangle has a common point $r$ by Lemma \ref{le3.1}. Both endpoints of $P_{v_1}$ are occupied by its private copies. Since $v_2,v_3$ have no neighbors in private copies, $r$ is not an endpoint of $P_{v_1}$. Thus $P_{v_1}$ uses two opposite directions at $r$. The segments $P_{v_2},P_{v_3}$ must end there and use the other two opposite directions. They are therefore collinear and perpendicular to $P_{v_1}$.
\end{proof}

\par An instance of \textsc{HV-Rectilinear Planarity} is a planar graph $G$ with an edge-label function $\lambda:E(G)\to\{H,V\}$. It asks for a planar straight-line drawing in which every $H$-edge is horizontal and every $V$-edge is vertical. Vertices must be distinct, and edges may meet only at common endvertices. The embedding is not fixed. Didimo et al. \cite[Theorem~1]{didimo2014complexity} proved that this problem is NP-complete for subcubic graphs.

\begin{figure}
\centering
  \setlength{\unitlength}{1bp}%
   \begin{picture}(418.54, 136.76)(0,0)
  \put(0,0){\includegraphics{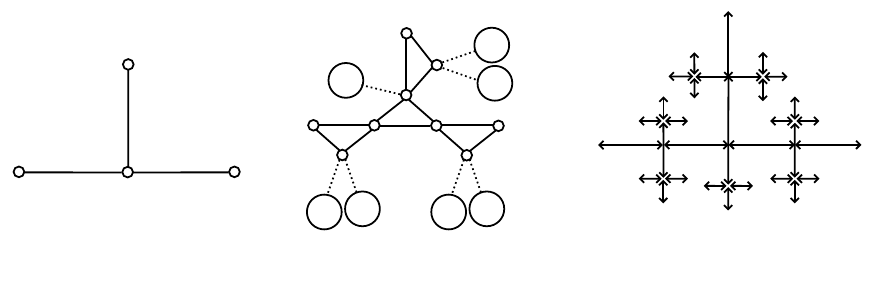}}
  \put(57.87,44.27){\fontsize{9.72}{11.66}\selectfont $u_1$}
  \put(5.67,44.39){\fontsize{9.72}{11.66}\selectfont $u_2$}
  \put(109.12,44.39){\fontsize{9.72}{11.66}\selectfont $u_3$}
  \put(64.59,103.39){\rotatebox{0.00}{\fontsize{9.72}{11.66}\selectfont \smash{\makebox[0pt][l]{$u_4$}}}}
  \put(26.01,57.12){\fontsize{9.72}{11.66}\selectfont $e_1$}
  \put(80.28,57.32){\fontsize{9.72}{11.66}\selectfont $e_2$}
  \put(62.50,79.73){\fontsize{9.72}{11.66}\selectfont $e_3$}
  \put(228.37,112.10){\fontsize{9.72}{11.66}\selectfont $K_3$}
  \put(229.88,93.79){\fontsize{9.72}{11.66}\selectfont $K_3$}
  \put(148.66,32.19){\rotatebox{0.00}{\fontsize{9.72}{11.66}\selectfont \smash{\makebox[0pt][l]{$K_3$}}}}
  \put(167.10,32.89){\rotatebox{0.00}{\fontsize{9.72}{11.66}\selectfont \smash{\makebox[0pt][l]{$K_3$}}}}
  \put(208.32,32.61){\rotatebox{0.00}{\fontsize{9.72}{11.66}\selectfont \smash{\makebox[0pt][l]{$K_3$}}}}
  \put(225.65,32.89){\rotatebox{0.00}{\fontsize{9.72}{11.66}\selectfont \smash{\makebox[0pt][l]{$K_3$}}}}
  \put(158.35,95.17){\fontsize{9.72}{11.66}\selectfont $K_3$}
  \put(57.87,7.76){\fontsize{9.72}{11.66}\selectfont (a)}
  \put(189.31,7.76){\fontsize{9.72}{11.66}\selectfont (b)}
  \put(342.67,7.76){\fontsize{9.72}{11.66}\selectfont (c)}
  \end{picture}%
  \caption{(a) An HV drawing of $K_{1,3}$. (b) The graph $X(K_{1,3})$. The dotted edges indicate a vertex that, together with a $K_3$, forms a $K_4$. (c) The $B_{0}$-CPG representation of $X(K_{1,3})$; restricting attention to the four triple contact points and the routes between them gives exactly an HV drawing of $K_{1,3}$.}
  \label{fig1}
\end{figure}

\par We first normalize an instance. Delete isolated vertices. If a degree-three vertex has three incident edges of the same label, the instance is negative: a horizontal or vertical line supplies only two directions at the vertex. In this case output the diamond which is a fixed planar negative instance of $B_0$-CPG recognition.

\par For each degree-two vertex, add one leaf. Give the new edge label $V$ when both old edges have label $H$, label $H$ when both have label $V$, and label $H$ when the old labels differ. This preserves the answer. Deleting the new edges proves one implication. For the other, take a sufficiently small neighborhood of the old vertex in an HV drawing. Two edges with the same label already occupy the two opposite directions of that axis, leaving both perpendicular directions unused. Two edges with different labels leave the opposite horizontal direction unused. We can place the new leaf on an arbitrarily short segment in an appropriate unused direction, without meeting any other part of the drawing.

\par After this normalization, every vertex has degree one or three, and the three labels at a degree-three vertex are either $H,H,V$ or $V,V,H$. Let $e^*(u)$ be the unique edge whose label differs from the other two at such a vertex $u$. The normalization runs in polynomial time and preserves planarity and the maximum-degree bound. Below, $G$ denotes the normalized nonempty instance.

\par Construct an unlabeled graph $X(G)$ as follows. For every incidence $u\in e$, introduce an incidence vertex $\ell_{u,e}$. Furthermore, if $d_{G}(u)\in\{1,3\}$, then $\ell_{u,e}$ is called an incidence vertex of degree one or three. For every edge $e=uv$, introduce $z_e$, make $\ell_{u,e},\ell_{v,e},z_e$ a triangle, and attach two private copies of $K_4$ at $z_e$. For every degree-three vertex $u$, make its three incidence vertices a triangle and attach one private $K_4$ at $\ell_{u,e^*(u)}$. Add no other edges. Fig. \ref{fig1}(b) gives an example of $X(K_{1,3})$ constructed from $K_{1,3}$.

\par Let $m=|E(G)|$ and let $n_3$ be the number of degree-three vertices of $G$. There are $2m$ incidence vertices, $m$ vertices $z_e$, $6m$ private vertices at the edge gadgets, and $3n_3$ private vertices at the vertex gadgets. Each edge gadget contributes three triangle edges and six edges per private-$K_4$ (three triangle edges plus three edges to $z_e$), for a total of twelve private edges.  Each degree-three vertex gadget contributes three triangle edges and six edges per private-$K_4$. Hence
\begin{equation}\label{eq1}
|V(X(G))|=9m+3n_3\text{ and } |E(X(G))|=15m+9n_3.
\end{equation}

\par The construction preserves planarity independently of whether the HV instance is positive. Start from any planar embedding of $G$. Replace each degree-three vertex by a small triangle of incidence vertices, respecting their cyclic order, and route each edge between its two incidence vertices along its old embedding. Add $z_e$ and its two incident edges in a narrow region on one side of the edge $\ell_{u,e}\ell_{v,e}$. Finally, each private $K_4$ meets the graph built so far in only one vertex, so we can add it in a small incident disk.

\par A vertex $z_e$ has degree $2+6=8$. A degree-one incidence vertex has degree two. A degree-three incidence vertex has degree four before a possible private attachment and degree seven afterwards. Every private vertex has degree three. Thus
\begin{equation}\label{eq2}
\Delta(X(G))\leq8.
\end{equation}

\begin{lemma}\label{le3.3}
The normalized instance $G$ has an HV drawing if and only if $X(G)$ has a $B_0$-CPG representation.
\end{lemma}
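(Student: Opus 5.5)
We prove the two directions separately. The backward direction (representation $\Rightarrow$ drawing) is driven by the rigidity results Lemma \ref{le3.1} and Lemma \ref{le3.2}. The forward direction (drawing $\Rightarrow$ representation) is an explicit construction modelled on Fig.~\ref{fig1}(c).

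For the forward direction, we start from an HV drawing of $G$. By a small perturbation and scaling we may assume all vertices have distinct, well-separated grid coordinates wherever the drawing allows, and that there is room near every vertex.

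For each degree-three vertex $u$, the two edges with equal label lie on one line through $u$, and $e^*(u)$ leaves perpendicularly. We place the point $p_u$ at the drawn position of $u$. For each edge $e=uv$ at $u$, the segment $P_{\ell_{u,e}}$ starts at $p_u$ and runs along the drawn edge $e$.

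Concretely, the edge $e=uv$ is a straight axis-parallel segment. We split it at an interior point $q_e$. Then $P_{\ell_{u,e}}$ runs from the $u$-end to $q_e$, and $P_{\ell_{v,e}}$ runs from $q_e$ to the $v$-end. The segment $P_{z_e}$ is a short segment crossing perpendicularly through $q_e$, so the triangle $\ell_{u,e}\ell_{v,e}z_e$ meets at $q_e$ as in Lemma \ref{le3.2}. At each end of $P_{z_e}$ we place a private $K_4$, realised as three tiny segments ending at that endpoint and using its three free directions.

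At $p_u$, the three incidence segments end there using three of the four directions. The fourth free direction is opposite to $e^*(u)$. We attach the private $K_4$ of $\ell_{u,e^*(u)}$ at the far endpoint of $P_{\ell_{u,e^*(u)}}$ instead. That endpoint is free, so the attachment causes no conflict.

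Degree-one vertices give a free endpoint of their single incidence segment, which needs no gadget. We then check that no unwanted contacts occur. Distinct drawn edges meet only at common vertices, and all gadgets are chosen small enough.

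For the backward direction, take a $B_0$-CPG representation of $X(G)$. By Lemma \ref{le3.2}, for each edge $e=uv$ the segments $P_{\ell_{u,e}}$ and $P_{\ell_{v,e}}$ are collinear and share an endpoint $q_e$, with $P_{z_e}$ perpendicular through $q_e$.

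For a degree-three vertex $u$, Lemma \ref{le3.1} gives a unique common point $p_u$ of its three incidence segments. This point is not $q_e$, because otherwise four segments would meet there and create the edge $z_e\ell_{u,e'}$. Also, $p_u$ cannot be an interior point of two of the three segments. In fact, if $p_u$ were interior to one incidence segment, it would be interior to exactly one of them.

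Here the private $K_4$ at $\ell_{u,e^*(u)}$ matters. It occupies one endpoint of that segment, and we have to show this forces the correct geometry. The other endpoint of $P_{\ell_{u,e^*(u)}}$ is $q_{e^*(u)}$, so $P_{\ell_{u,e^*(u)}}$ has no endpoint available at $p_u$. Hence $P_{\ell_{u,e^*(u)}}$ passes through $p_u$, and the other two incidence segments end at $p_u$, collinear and perpendicular to it.

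The key claim is that each incidence segment $P_{\ell_{u,e}}$ has $q_e$ at one end and $p_u$ in its closure, with the portion between them free of other paths. For degree-one $u$ we set $p_u$ to be the other endpoint.

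We then draw $u$ at $p_u$ and $e$ as the union of the subsegments $[p_u,q_e]$ and $[q_e,p_v]$. This union is a straight segment, horizontal or vertical. Labels are matched up to a global choice of axes only if a consistent labelling is derived. So we must verify that the direction at each degree-three vertex reproduces the pattern ($H,H,V$ or $V,V,H$) of $\lambda$, and that the labels propagate consistently along edges.

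The main obstacle is this label consistency. The representation only certifies that $e^*(u)$ is perpendicular to the other two edges at $u$, and that each edge is straight. I would argue that perpendicular/parallel relations along the connected components of $G$ determine the labelling up to swapping $H$ and $V$ on each component. A global rotation per component, realised by translating components apart, then fixes it.

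If this local argument fails to pin down the labels, the fallback is to restate the correspondence to HV drawings "up to relabelling". One would then have to check that the normalized reduction of Didimo et al.\ is insensitive to this.

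Finally, distinct drawn edges meet only at common vertices, because the underlying segments have disjoint interiors and contacts correspond to edges of $X(G)$.
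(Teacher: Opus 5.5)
Your backward direction matches the paper's proof: Lemma \ref{le3.2} at each $q_e$, the common point $p_u$ forced into the interior of $P_{\ell_{u,e^*(u)}}$, straight edges $[p_u,q_e]\cup[q_e,p_v]$, and a $90^\circ$ rotation per component. The gap is in the forward construction at a degree-three vertex, which does not yield a representation.

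You let all three incidence segments end at $p_u$, so $P_{\ell_{u,e^*(u)}}=[p_u,q_{e^*(u)}]$. You then place its private $K_4$ at the ``far endpoint'' and claim that endpoint is free. But the far endpoint is $q_{e^*(u)}$, where $P_{\ell_{v,e^*(u)}}$ ends and $P_{z_{e^*(u)}}$ passes through. All four directions there are already used, and private segments placed there would touch the non-neighbours $z_{e^*(u)}$ and $\ell_{v,e^*(u)}$. The near endpoint $p_u$ is also unusable, because the other two incidence segments end there. By Lemma \ref{le3.1}, the private $K_4$ needs an endpoint of $P_{\ell_{u,e^*(u)}}$ at which exactly its four segments end, so your configuration cannot be completed. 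Your own backward argument already shows this: in every representation $P_{\ell_{u,e^*(u)}}$ passes through the common point, so no construction in which it ends at $p_u$ can work.

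The repair is the paper's, and it uses an observation you already made: the direction at $u$ opposite to $e^*(u)$ is unused. Extend $P_{\ell_{u,e^*(u)}}$ a short distance through $u$ into that direction. The other two incidence segments still end at $u$, so the vertex triangle is realized with the prescribed segment passing through the common point. The new endpoint is free and can carry the three tiny private segments in its remaining three directions. For integral coordinates, replace coordinates by their ranks, as the paper does; a perturbation risks destroying the horizontal and vertical edges.

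Two smaller points on the backward direction. You should state why the private endpoint $a_u$ differs from both $q_{e^*(u)}$ and $p_u$; each follows from a forbidden contact. Your fallback on labels is unnecessary. After normalization every vertex has degree one or three, so any two edges of a component are linked through degree-three vertices. At each of these, $\theta(e)+\lambda(e)\bmod 2$ is constant by the collinear/perpendicular pattern, so the per-component rotation always recovers the labels.
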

\begin{proof}
Suppose first that $G$ has an HV drawing. For every edge $e=uv$, choose an interior point $q_e$ and represent $\ell_{u,e},\ell_{v,e}$ by the two half-edge segments from $u,v$ to $q_e$.

\par At a degree-three vertex $u$, the two edges with the same label leave $u$ in opposite directions, and $e^*(u)$ leaves in a perpendicular direction. Extend the segment representing $\ell_{u,e^*(u)}$ a short distance through $u$ into the fourth, unused direction. The other two segments continue to end at $u$. The three segments now represent the vertex triangle, with the specified segment passing through the common point. At its new endpoint, place three still shorter segments in the other three directions to realize the private $K_4$.

\par At every $q_e$, place a short perpendicular segment $P_{z_e}$ through the point. The two half-edge segments ($\ell_{u,e},\ell_{v,e}$ ) end at $q_e$, so these three segments realize the edge triangle. At each endpoint of $P_{z_e}$, add three shorter segments to represent its private $K_4$.

\par Choose pairwise disjoint neighborhoods of the vertices of $V(G)$ and the selected points $q_e$. All modifications fit inside these neighborhoods. Within each neighborhood, choose the private-$K_4$ segments shorter than the distance from their common point to every other segment not intended to meet them. No unintended contact or overlap results. All paths remain straight. Original real coordinates pose no difficulty: replacing the vertex coordinates by ranks first yields an integer HV drawing with the same intersections and non-intersections; the local modifications may then use rational lengths, followed by a common scaling.

\begin{figure}
\centering
  \setlength{\unitlength}{1bp}%
 \begin{picture}(415.07, 105.24)(0,0)
  \put(0,0){\includegraphics{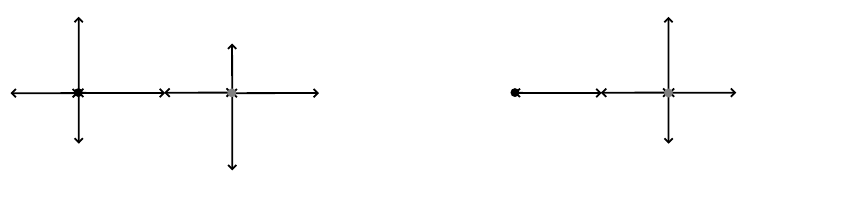}}
  \put(73.21,7.76){\fontsize{9.72}{11.66}\selectfont (a)}
  \put(113.09,31.35){\fontsize{9.72}{11.66}\selectfont $P_{\ell_{v,e^*(v)}}$}
  \put(39.55,91.98){\fontsize{9.72}{11.66}\selectfont $P_{\ell_{u,e^*(u)}}$}
  \put(47.22,67.08){\fontsize{9.72}{11.66}\selectfont $P_{\ell_{u,e}}$}
  \put(82.13,67.08){\fontsize{9.72}{11.66}\selectfont $P_{\ell_{v,e}}$}
  \put(267.25,7.76){\fontsize{9.72}{11.66}\selectfont (b)}
  \put(321.47,85.22){\fontsize{9.72}{11.66}\selectfont $P_{\ell_{v,e^*(v)}}$}
  \put(256.76,67.08){\fontsize{9.72}{11.66}\selectfont $P_{\ell_{u,e}}$}
  \put(291.66,67.08){\fontsize{9.72}{11.66}\selectfont $P_{\ell_{v,e}}$}
  \end{picture}%
  \caption{The black and gray points represent $c_u$ and $c_v$, respectively. $P_{\ell_{u,e}}\cup P_{\ell_{v,e}}$ is the horizontal segment connecting $u$ and $v$ in the HV drawing of $G$, where $e=uv\in E(G)$. (a) $u$ and $v$ have degree three in $G$; (b) $u$ and $v$ have degrees one and three, respectively, in $G$.}
  \label{fig7}
\end{figure}

\par Conversely, fix any $B_0$-CPG representation of $X(G)$. For every $e=uv\in E(G)$, Lemma \ref{le3.2} gives a point $q_e$ which is an endpoint of both $P_{\ell_{u,e}}$ and $P_{\ell_{v,e}}$. The two segments are collinear and extend in opposite directions from $q_e$. The segment $P_{z_e}$ passes through $q_e$ perpendicularly.

\par For a degree-three vertex $u$, Lemma \ref{le3.1} gives a common point $c_u$ of its three incidence segments (the black point in Fig. \ref{fig7}(a) is $c_u$). Consider $S^*=P_{\ell_{u,e^*(u)}}$. One endpoint of $S^*$ is $q_{e^*(u)}$. Its private $K_4$ occupies an endpoint $a_u$. $a_u\neq q_{e^*(u)}$: if they coincided, the private vertices would also touch $P_{z_{e^*(u)}}$ and the opposite incidence segment, creating forbidden edges. Thus they are exactly the two endpoints of $S^*$.

\par The point $c_u$ is neither of them. If $c_u=q_{e^*(u)}$, the other two incidence segments at $u$ would meet $P_{z_{e^*(u)}}$, although their vertices are not adjacent to $z_{e^*(u)}$. If $c_u=a_u$, those segments would meet the three private segments, again producing forbidden edges. Therefore
\begin{equation}\label{eq3}
c_u\in P_{\ell_{u,e^*(u)}}^o.
\end{equation}
The other two incidence segments must end at $c_u$, use the two opposite perpendicular directions, and hence be collinear. This establishes the prescribed equality and inequality relations among the three edge labels at $u$. Also, $c_u \neq q_e$ for every edge $e$ incident with $u$. If $c_u = q_e$, then the two other segments incident with $u$ would meet the non-neighbor $z_e$.

\par For a degree-one vertex $u$ incident with $e$, let $c_u$ be the endpoint of $P_{\ell_{u,e}}$ other than $q_e$ (the black point in Fig. \ref{fig7}(b) is $c_u$). It is free. Indeed, the incidence vertex has only two neighbors, and its contacts with both of them already occur at $q_e$. Since in a $B_0$-CPG representation every path is a straight segment, two segments meet in at most one point. Hence no path other than $P_{z_e}$ and $P_{\ell_{v,e}}$ can meet $P_{\ell_{u,e}}$ a second time, and $c_u$ belongs to no other path.

\par Place every vertex $u\in V(G)$ at $c_u$. For $e=uv$, draw the segment
\begin{equation}\label{eq4}
\Gamma(e)=[c_u,c_v]\subseteq P_{\ell_{u,e}}\cup P_{\ell_{v,e}}.
\end{equation}
The two original incidence segments extend in opposite directions from $q_e$, and neither $c_u$ nor $c_v$ equals $q_e$. Thus this is a non-degenerate horizontal or vertical segment. Fig. \ref{fig7}(a) and (b) show the positions of $c_u$ (black point) and $c_v$ (gray point) in the $B_0$-CPG representation of $X(G)$ for the cases $d_G(u)=d_G(v)=3$ and $d_G(u)=1,\ d_G(v)=3$, respectively.

\par We verify all conditions for a planar drawing. Adjacent vertices of $G$ are distinct; their incident segments meet only at $q_e$, while their chosen points differ from $q_e$. For two nonadjacent vertices of $G$, choose an incidence segment at each; the corresponding incidence vertices are nonadjacent in $X(G)$, so the segments are disjoint, and their chosen points cannot coincide.

\par For two edges of $G$ with no common endvertex, every incidence vertex of one is nonadjacent to every incidence vertex of the other. Their four representing segments are consequently disjoint across the two edges, and \eqref{eq4} excludes any intersection of the recovered edges. If two edges share $u$, the only adjacent pair among their four incidence vertices is the pair at $u$. Its two segments intersect exactly at $c_u$, so the recovered edges meet only at that point. A vertex not incident with an edge cannot lie on it either: an incidence segment containing the vertex point would then meet a nonadjacent incidence segment used by that edge. Hence 
\[
\Gamma=\bigcup_{e\in E(G)}\Gamma(e)
\]
is a planar straight-line drawing of $G$.

\par It remains to recover the absolute labels, rather than only the local parallelism relations. Encode horizontal and vertical directions by 0 and 1. Let $\theta(e)$ be the direction in $\Gamma$ and let $\lambda(e)$ be the input label in the same encoding. At a degree-three vertex, \eqref{eq3} implies, for any two incident edges $e,f$, that
\begin{equation}\label{eq5}
\theta(e)\bmod 2+\lambda(e)\bmod 2=\theta(f)\bmod 2+\lambda(f)\bmod 2.
\end{equation}
Indeed, the two edges with equal input labels are represented by collinear segments, and the third is perpendicular to them. Since every vertex has degree one or three, \eqref{eq5} propagates along each connected component containing an edge. The value $\theta(e)\bmod 2+\lambda(e)\bmod 2$ is constant on that component. If it is one, rotate the entire component through $90^\circ$; if it is zero, leave the component unchanged. Translating different components into disjoint regions yields an HV drawing. Deleting the leaves introduced during normalization recovers a drawing of the original instance.
\end{proof}

\begin{theorem}\label{th3.1}
Recognizing planar $B_0$-CPG graphs is NP-complete even for graphs of maximum degree at most 8.
\end{theorem}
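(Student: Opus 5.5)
Membership in NP follows directly from Lemma \ref{le2.1} with $k=0$. Hardness comes from a polynomial reduction from \textsc{HV-Rectilinear Planarity} restricted to subcubic planar graphs, which Didimo et al. \cite[Theorem~1]{didimo2014complexity} proved NP-complete. The reduction $G\mapsto X(G)$ is assembled from the pieces already set up in this section. Its correctness is Lemma \ref{le3.3}, so what remains is to confirm that it is polynomial, that it outputs planar instances, and that the degree bound holds.

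Given a labelled planar subcubic instance $(G,\lambda)$, I would first apply the normalization. Delete isolated vertices. If some degree-three vertex has three equally labelled edges, output the diamond $K_4-e$; it is planar, has maximum degree $3\le 8$, and is not $B_0$-CPG by Lemma \ref{le3.1}. Otherwise, attach a leaf to each degree-two vertex with the label prescribed above. The normalization argument shows this preserves the answer, planarity, and subcubicity. If the graph becomes empty after these steps, output a fixed positive planar instance, e.g.\ a single vertex, which is trivially representable by a single segment. Otherwise construct $X(G)$. By \eqref{eq1} it has $9m+3n_3$ vertices and $15m+9n_3$ edges, so it is built in time linear in $|G|$.

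Next I would invoke the planarity argument given before Lemma \ref{le3.3}: replace each degree-three vertex by a triangle, add each $z_e$ beside its edge, and hang the private $K_4$'s in small disks. This shows $X(G)$ is planar whenever $G$ is planar, independently of whether the instance is positive. By \eqref{eq2}, $\Delta(X(G))\le 8$. Lemma \ref{le3.3} then shows that the normalized $G$ has an HV drawing if and only if $X(G)$ is $B_0$-CPG. Combined with the equivalence between the normalized and original instances, this gives correctness.

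The main point needing care is the edge cases of the normalization rather than the gadget analysis, which Lemma \ref{le3.3} already handles. One must check that the rejection via the diamond is sound, that the added leaves do not create a degree-three vertex with three equal labels, and that empty or trivial components are handled. The three labelling rules guarantee the $H,H,V$ or $V,V,H$ pattern at every new degree-three vertex. Each connected component of the normalized graph contains an edge, as Lemma \ref{le3.3} requires. With these checked, the theorem follows.
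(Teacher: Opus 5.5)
Your proposal is correct and follows the paper's own proof. You take membership from Lemma \ref{le2.1}, and hardness from the normalization plus $X(G)$, with correctness from Lemma \ref{le3.3}, linear size from \eqref{eq1}, the degree bound from \eqref{eq2}, and the stated planarity argument. Your explicit treatment of the edge cases, namely a fixed positive output when the normalized instance is empty and the check of the three labelling rules, only makes precise what the paper leaves implicit in the phrase ``normalized nonempty instance.''
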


\begin{proof}
The normalization and construction of $X(G)$ runs in polynomial time. Equations \eqref{eq1} and \eqref{eq2} give linear output size and the asserted degree bound, and the output is always planar. Lemma \ref{le3.3} proves the reduction from the subcubic NP-complete problem in~\cite{didimo2014complexity}. Lemma \ref{le2.1} proves membership in NP.
\end{proof}

\section{One-bend CPG recognition}
\par In this section, we show that \textsc{Recognition} is NP-complete for planar $B_1$-CPG graphs, via a reduction from the \textsc{Planar Exactly 3-Bounded 3-Sat} problem.

\subsection{The end-eating gadget}
\begin{figure}
\centering
  \setlength{\unitlength}{1bp}%
 \begin{picture}(385.18, 161.40)(0,0)
  \put(0,0){\includegraphics{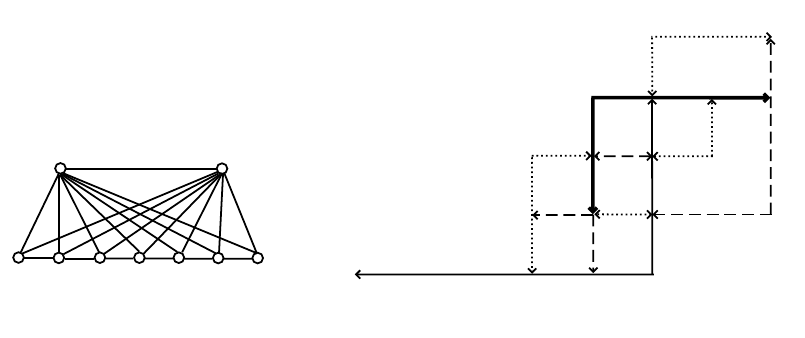}}
  \put(5.67,28.78){\fontsize{9.72}{11.66}\selectfont $u_1$}
  \put(24.61,28.78){\fontsize{9.72}{11.66}\selectfont $u_2$}
  \put(45.92,28.78){\fontsize{9.72}{11.66}\selectfont $u_3$}
  \put(64.86,28.78){\fontsize{9.72}{11.66}\selectfont $u_4$}
  \put(83.80,28.78){\fontsize{9.72}{11.66}\selectfont $u_5$}
  \put(23.63,86.99){\fontsize{9.72}{11.66}\selectfont $v$}
  \put(101.18,86.60){\fontsize{9.72}{11.66}\selectfont $w$}
  \put(102.71,28.61){\fontsize{9.72}{11.66}\selectfont $u_6$}
  \put(121.65,28.61){\fontsize{9.72}{11.66}\selectfont $u_7$}
  \put(50.60,7.76){\fontsize{9.72}{11.66}\selectfont (a) $H_1$}
  \put(268.18,108.05){\fontsize{9.72}{11.66}\selectfont $P_{w}$}
  \put(324.33,148.14){\fontsize{9.72}{11.66}\selectfont $P_{u_7}$}
  \put(223.76,20.88){\fontsize{9.72}{11.66}\selectfont $P_{v}$}
  \put(323.95,96.20){\fontsize{9.72}{11.66}\selectfont $P_{u_1}$}
  \put(335.79,63.62){\fontsize{9.72}{11.66}\selectfont $P_{u_6}$}
  \put(291.79,91.44){\fontsize{9.72}{11.66}\selectfont $P_{u_2}$}
  \put(291.79,64.89){\fontsize{9.72}{11.66}\selectfont $P_{u_5}$}
  \put(285.87,41.19){\fontsize{9.72}{11.66}\selectfont $P_{u_4}$}
  \put(239.04,70.85){\fontsize{9.72}{11.66}\selectfont $P_{u_3}$}
  \put(301.90,7.76){\fontsize{9.72}{11.66}\selectfont (b)}
  \end{picture}%
\caption{The graph $H_1$ and its $B_1$-CPG representation.}
\label{fig3}
\end{figure}

\begin{figure}
\centering
  \setlength{\unitlength}{1bp}%
   \begin{picture}(275.57, 258.34)(0,0)
  \put(0,0){\includegraphics{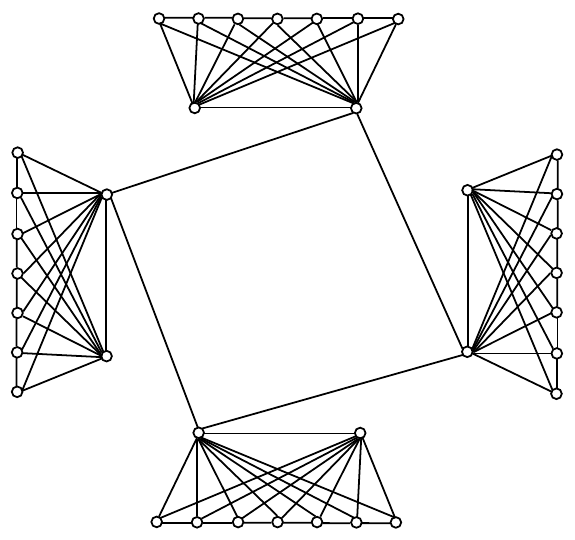}}
  \put(206.35,90.85){\fontsize{9.72}{11.66}\selectfont $v_0$}
  \end{picture}%
  \caption{The graph $F$, in which $v_0$ is the only vertex that can be adjacent to a vertex outside $V(F)$.}
  \label{fig8}
\end{figure}

\par To construct a planar end-eating gadget $F$, we first introduce the graph $H_1$. Let $H_1=K_2\vee P_7$ (depicted in Fig. \ref{fig3}(a)), with universal vertices $v,w$ and path $u_1u_2\dots u_7$. We refer to $P_v$ and $P_w$ as the {\em hosts} of $P_{u_i}$. The graph $H_1$ is planar.

\par The end-eating graph $F$ is constructed by taking four vertex-disjoint copies of $H_1$, denoted $H_1^0,\dots,H_1^3$ with roots $v_0,\dots,v_3$, and adding the edges $v_iv_{i+1}$ for indices modulo four (see Fig. \ref{fig8}). We use this graph in the proof of the NP-hardness of \textsc{Recognition} restricted to planar $B_1$-CPG graphs.

\par We aim to prove that in every $B_1$-CPG representation of $F$, the path $P_{v_0}$ has no free endpoint. To this end, we first prove that in every $B_1$-CPG representation of $H_1$, the path $P_v$ has at most one free endpoint.

\par A 1-bend path is monotone in both coordinates and lies in the axis-parallel rectangle determined by its endpoints. We will use this elementary property throughout the next proof. In particular, if the path leaves an endpoint in the downward direction, it never returns to that endpoint's row.

\par For two 1-bend paths in which share a common point $p$, a path that starts from $p$ and goes to the left is called a {\em left-going path}, a path that starts from $p$ and goes to the right is called a {\em right-going path}, a path that starts at $p$ and goes downward is called a {\em downward path}; and a path that starts at $p$ and goes upward is called an {\em upward path}.

\begin{lemma}\label{le4.1}
In every $B_1$-CPG representation of $H_1$, each of $P_v$ and $P_w$ has at most one free endpoint.
\end{lemma}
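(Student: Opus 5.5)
The plan is a proof by contradiction for $P_v$; the same argument with the roles of $v$ and $w$ exchanged handles $P_w$, since $H_1=K_2\vee P_7$ is symmetric in $v,w$. Suppose both endpoints of $P_v$ are free. Then no other path contains an endpoint of $P_v$. Hence every one of the eight contacts of $v$ (with $w,u_1,\dots,u_7$) is realized at a point of $P_v^o$ where the other path ends. By the rule that distinct paths at a contact point use distinct grid directions, each such path arrives from one of the two sides of a segment of $P_v$. It may also arrive at the bend, in the single direction left free there. So $P_v$, with at most two segments, offers at most four \emph{sides}: two sides of each segment, with the bend point shared. Each of the eight neighbours is attached to one side.

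Next I would use planarity. Note $|E(H_1)|=21=3\cdot 9-6$, so $H_1$ is a triangulation and in particular $3$-connected. Its embedding is therefore unique, and the contact representation induces it. Around $v$ the neighbours must appear in the cyclic order $w,u_1,\dots,u_7$, and consecutive pairs span triangular faces. Walking around the closed curve that bounds a thin neighbourhood of $P_v$, the attachment points therefore appear in this cyclic order. Each of the four sides thus receives a contiguous block of this cyclic sequence. Since there are eight neighbours and at most four sides, some side carries a block of at least two consecutive $u_i$. I would refine this by the following geometric observation. The face between two consecutive attachments on the same side is bounded by $P_v$, the two attached paths, and their common contact. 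The two attached paths are either $u_iu_{i+1}$ or involve $w$.

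The core step is to bound how many consecutive $u_i$ one side can host. Suppose $u_{i-1},u_i,u_{i+1}$ attach consecutively on the same side of one segment of $P_v$. Each also touches $P_w$, and $u_i$ touches $u_{i\pm1}$. Using monotonicity of $1$-bend paths (each lies in the rectangle spanned by its endpoints and never returns to the row or column of an endpoint it leaves perpendicularly), I would show the following. The middle path $P_{u_i}$, together with the two touches to its neighbours and the touch to $P_w$, forces $P_{u_{i-1}}$ or $P_{u_{i+1}}$ to pass around $P_{u_i}$. This would use up both endpoints and the single bend of that outer path, leaving it unable to reach $P_w$ without crossing or without creating a contact with a non-neighbour $u_j$. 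This caps each side at two $u$'s; the side containing the bend needs a separate short case. With $P_v$ straight there are only two sides, giving a contradiction immediately with seven $u$'s. With one bend there are at most four sides, and they must also accommodate $w$ and the faces $vwu_1$, $vwu_7$. A final count then yields the contradiction.

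The main obstacle is that last geometric step: rigorously excluding three consecutive $u$'s on one side. Making the "trapping" argument precise with only monotonicity requires a case split on the directions in which $P_w$ is met (same or opposite side, before or after the bend). I would first try to reduce it via the Jordan curve formed by $P_v$, $P_{u_{i-1}}$, $P_w$, $P_{u_{i+1}}$. The goal is to show that the endpoints of $P_{u_i}$ must both be spent inside this curve on $P_v$ and $P_w$. Its contacts with $u_{i\pm1}$ then force those paths to bend inward, and that bend contradicts their reaching $P_w$ monotonically.
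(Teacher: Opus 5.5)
There is a genuine gap, and it sits exactly at the step you defer to ``a final count.'' Suppose you grant your cap of at most two consecutive $u_i$ per side. For a bent $P_v$ the count still does not close. You have four sides plus the bend point, and at the bend $P_v$ uses only two of the four directions, so two paths can end there, not one. That leaves room for nine or ten $u$-attachments, more than the seven you need to place. Moreover, $w$ costs nothing against a bound that counts only $u$'s. The representation in Fig.~\ref{fig3}(b) shows that sides of $P_v$ really do carry two $u$'s each. So no per-side capacity argument can separate ``one free endpoint'' from ``two free endpoints.'' The contradiction has to come from the interaction with $P_w$, which your plan never pins down. Your straight case ($2\cdot 2<7$) is fine, modulo proving the cap.

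The missing idea is to exploit the endpoints of $P_w$.
\begin{itemize}
\item Since both endpoints of $P_v$ are free, the edge $vw$ consumes an endpoint $q$ of $P_w$ lying in $P_v^o$. Hence $P_w$ has only one further endpoint $r$.
\item At most one $u$-path contains $q$, and at most two $u$-paths, necessarily adjacent, contain $r$.
\item Every remaining (ordinary) $u$-path must spend one endpoint in $P_v^o$ and the other in $P_w^o$. Hence any contact between two ordinary paths is a shared endpoint on a host.
\item A case analysis on the shape of $P_w$ and the position of $q$, using coordinate monotonicity, then shows two things. First, the ordinary paths induce only isolated vertices and independent edges. Second, the path through $q$ touches at most one ordinary path.
\item Deleting the at most two vertices through $r$ leaves at most two subpaths of $P_7$, each with at most two vertices. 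This gives $7\le 6$.
\end{itemize}

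The key claim, that no three consecutive $u_i$ are all ordinary, holds \emph{regardless of which sides they occupy}. The configuration ruled out in Fig.~\ref{fig6}(a) places the three paths on three different sides of $P_v$. A per-side bound cannot detect it.

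There is also a secondary problem. Your embedding step assumes each neighbour attaches to exactly one side. But a $u$-path, for instance one through $r$, may have both endpoints in $P_v^o$, and two paths may touch twice. So the claim that the representation induces the embedding with rotation $w,u_1,\dots,u_7$ around $P_v$ needs its own argument. The paper's proof avoids planarity altogether.
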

\begin{proof}
The two universal vertices are interchangeable, so it suffices to consider $P=P_v$. Suppose, for a contradiction, that both endpoints of $P$ are free, and put $Q=P_w$.

\par We may assume that $P$ has a bend; the case analysis below requires a designated bend point splitting $P$ into two arms. If $P$ is straight, proceed as follows. Refine the grid so that a small open neighborhood of a free endpoint of $P$, one meeting no point of any other path, contains new grid points. Shorten the terminal portion of $P$ slightly inside this neighborhood and append a short perpendicular segment terminating at a new grid point of the neighborhood. The resulting path has exactly one bend, so the representation remains a $B_1$-CPG representation. The modification is confined to a region containing no point of any other path; hence every contact of every path, in particular the contact realizing the edge $vw$, is unchanged. The new endpoint lies in the same region and is still free, and the opposite endpoint of $P$ is untouched, so both endpoints of $P$ remain free.

\par After a translation, rotation, and reflection, write
\[
P=(-a,0)\to(0,0)\to(0,h),\qquad a,h>0.
\]
Since the endpoints of $P$ are free, the edge $vw$ uses an endpoint of $Q$ in the interior of $P$. Interchanging the two arms of $P$ if necessary, choose such an endpoint $q=(c,0)$ with $-a<c\leq0$. Write
\[
Q=(c,0)\to(c,b)\to(d,b),\qquad b\ne0.
\]
The case $d=c$ means that $Q$ is straight. If $c=0$, the two unused directions at the bend of $P$ are right and down; interchanging its arms allows us to assume that $Q$ starts downwards, so $b<0$. Let $r=(d,b)$ be the other endpoint of $Q$.

\par Let $B$ be the set of vertices among $u_1,\dots,u_7$ whose paths contain $r$. These vertices are pairwise adjacent in the induced path $P_7$. Hence
\begin{equation}\label{eq6}
|B|\leq2,\text{ and the two vertices of $B$ are adjacent if }|B|=2.
\end{equation}
At most one of the seven paths contains $q$: the interior of $P$ uses two directions at $q$, $Q$ uses a third, and only one remains. If such a path exists, denote it by $Z$. The point $q$ is an endpoint of $Z$.

\par Call a path among $P_{u_1},\dots,P_{u_7}$ {\em ordinary} if it contains neither $q$ nor $r$. In the following, we prove further properties of ordinary paths.

\begin{claim}\label{claim1}
Every ordinary path has exactly one endpoint in the interior of $P$, its other endpoint in the interior of $Q$, and no further point on $P\cup Q$. Consequently, any contact between two ordinary paths is a shared endpoint lying on $P$ or on $Q$.   
\end{claim} 

\par Let $R$ be an ordinary path, representing a vertex $u_i$. Since $u_i$ is adjacent to both $v$ and $w$, $R$ touches $P$ and touches $Q$. As both endpoints of $P$ are free, the contact with $P$ uses an endpoint of $R$ placed in the interior of $P$. As $R$ avoids $q$ and $r$, the contact with $Q$ cannot use an endpoint of $Q$; interior-disjointness then forces the contact with $Q$ to use the other endpoint of $R$, placed in the interior of $Q$.
\par Since the endpoints of $P$ are free, $P\cap Q\subseteq\{q,r\}$. The two contacts of $R$ are therefore distinct, $R$ has no further point on the hosts, and any contact between two ordinary paths is a shared endpoint on $P$ or on $Q$. $\square$

\begin{figure}
\centering
  \setlength{\unitlength}{1bp}%
  \begin{picture}(350.35, 118.97)(0,0)
  \put(0,0){\includegraphics{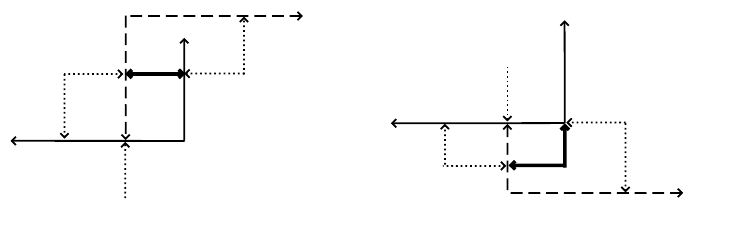}}
  \put(18.58,70.84){\fontsize{9.72}{11.66}\selectfont $A$}
  \put(65.92,87.18){\fontsize{9.72}{11.66}\selectfont $M$}
  \put(118.79,87.18){\fontsize{9.72}{11.66}\selectfont $C$}
  \put(60.87,102.80){\fontsize{9.72}{11.66}\selectfont $Q$}
  \put(88.75,53.06){\fontsize{9.72}{11.66}\selectfont $P$}
  \put(328.01,23.35){\fontsize{9.72}{11.66}\selectfont $Q$}
  \put(68.36,10.50){\fontsize{9.72}{11.66}\selectfont (a)}
  \put(258.67,7.76){\fontsize{9.72}{11.66}\selectfont (b)}
  \put(61.47,31.13){\fontsize{9.72}{11.66}\selectfont $Z$}
  \put(200.59,39.31){\fontsize{9.72}{11.66}\selectfont $A$}
  \put(254.26,42.62){\fontsize{9.72}{11.66}\selectfont $M$}
  \put(301.04,45.43){\fontsize{9.72}{11.66}\selectfont $C$}
  \put(259.91,94.92){\fontsize{9.72}{11.66}\selectfont $P$}
  \put(244.46,72.76){\fontsize{9.72}{11.66}\selectfont $Z$}
  \end{picture}%
  \caption{Two ways of placing three consecutive ordinary paths.}
  \label{fig6}
\end{figure}

\begin{claim}\label{claim2}
 When $c<0$, the contact graph of the ordinary paths consists only of isolated vertices and independent edges.   
\end{claim}
\par First, two ordinary paths cannot share an endpoint in the interior of the horizontal segment of $P$. Their endpoints on $Q$ lie strictly on the side of $y=0$ containing $Q\setminus\{q\}$. Coordinate monotonicity forces both paths to leave such a common endpoint into that same side, using the same direction and overlapping.

\par Suppose now that ordinary paths $A,M,C$ represent three consecutive vertices of the induced path $P_7$. The contacts of $M$ with $A,C$ occur at different endpoints of $M$. Otherwise, three ordinary paths would end at one host point, but the host interior already uses two directions. By interchanging $A,C$, assume that $A$ and $M$ share their endpoint on $Q$, while $M$ and $C$ share their endpoint on $P$.

\begin{case}\label{case1}
The contact point between $M$ and $C$ is $(0,t)$ with $0<t<h$, and the two paths leave it to the left and right.
\end{case}

\par The right-going path must end on the horizontal segment of $Q$ after one vertical turn. Consequently, $d>0$, and this horizontal segment crosses the line $x=0$. The range $0<b\leq h$ is impossible: it would meet the interior of $P$ or its free upper endpoint.

\par The alternative $b<0$ is also impossible. In that case the left-going path cannot end on the vertical segment of $Q$, which lies below $y=0$. To end on its horizontal segment, the path would have to turn vertically at a coordinate $j$ with $c<j<0$ and cross the horizontal segment of $P$ at $(j,0)$. Turning at $j=c$ would instead overlap the vertical segment of $Q$, and a turn at $j<c$ cannot reach its horizontal segment. These are all possibilities with at most one bend. Thus $b>h$.

\par All endpoints on $P$ now lie below the horizontal segment of $Q$. No two ordinary paths can share an endpoint in the interior of the horizontal segment of $Q$, since both must enter it from below. Nor can two ordinary paths share the bend $(c,b)$ of $Q$: one of the two unused directions there points upwards, whereas every endpoint on $P$ has height at most $h<b$. Therefore the right-going path cannot be the middle path $M$. It is $C$, and $M$ must run straight leftwards to the vertical segment of $Q$. The path $A$ enters the same point from the left and must start on the horizontal segment of $P$ further to the left. We obtain the forced form 
\begin{equation}\label{eq7}
A=(x,0)\to(x,t)\to(c,t),M=(c,t)\to(0,t),C=(0,t)\to(\alpha,t)\to(\alpha,b),
\end{equation}
with $-a<x<c<0<\alpha<d\text{ and }0<t<h<b$ (see Fig. \ref{fig6}(a)). The inequalities exclude free host endpoints, the two endpoints $q$ and $r$ of $Q$, and overlaps with the host segments.

\begin{case}\label{case2}
The contact point between $M$ and $C$ is the bend $(0,0)$ of $P$, the two paths leave it downwards and to the right.
\end{case}

\par A downward path cannot reach $Q$ with at most one bend unless $b<0$, and the right-going path again requires $d>0$. The right-going path ends on the horizontal segment of $Q$, which lies below every endpoint on $P$. It cannot share that endpoint with another ordinary path. It is therefore $C$, while $M$ goes downwards and then leftwards to the vertical segment of $Q$. The path $A$ approaches that contact from the left. Thus
\begin{equation}\label{eq8}
A=(x,0)\to(x,t)\to(c,t),M=(c,t)\to(0,t)\to(0,0),C=(0,0)\to(\alpha,0)\to(\alpha,b),
\end{equation}
with $-a<x<c<0<\alpha<d\text{ and }b<t<0$ (see Fig. \ref{fig6}(b)). There is no other location for a common endpoint on $P$: its horizontal interior was excluded, and its endpoints are free.

\par By \textbf{Cases} \ref{case1} and \ref{case2}, we show that in either \eqref{eq7} or \eqref{eq8}, these three ordinary paths form a whole connected component of the contact graph on $P_{u_1},\dots,P_{u_7}$. Their two common endpoints already use all four directions. The remaining endpoint of $A$ lies on the horizontal segment of $P$, where no other ordinary path can share it. The remaining endpoint of $C$ lies on the horizontal segment of $Q$. All endpoints on $P$ lie below that segment in \eqref{eq7}, and above it in \eqref{eq8}; thus no other ordinary path can share that endpoint either. Since ordinary paths can meet only through common host endpoints, no additional ordinary path touches the triple.

\par In \eqref{eq7}, $Z$, if present, leaves $q$ downwards and lies in $y<0$ except at $q$, whereas the three ordinary paths lie in $y\geq0$. In \eqref{eq8}, $Z$ leaves upwards and the three ordinary paths lie in $y\leq0$. None of them contains $q$, so $Z$ does not touch the triple.

\par It remains to consider a path $T$ containing $r=(d,b)$. It must also have an endpoint on $P$. This endpoint cannot lie in the horizontal interior of $P$: a 1-bend path from such a point to $r$ would reach $r$ along $y=b$ and overlap the horizontal segment of $Q$. Since $d>0$ and $b\notin[0,h]$, the only possible form is
\[
T=(0,s)\to(d,s)\to(d,\beta),\qquad 0\leq s<h,
\]
where the final segment contains $r$. In \eqref{eq7}, $s=t$ causes an overlap with $C$ and $s>t$ causes an interior crossing of its vertical segment. Hence $s<t$, in which case its horizontal segment lies below the portions of the triple in $x\geq0$, and its vertical segment lies on $x=d>\alpha$; neither segment meets the triple. In \eqref{eq8}, $s=0$ would overlap the horizontal segment of $C$, so $s>0$. Its horizontal part is then above the triple, and its vertical part is to the right. Again there is no contact.

\par Thus the triple is an isolated connected component. This contradicts the fact that the seven paths induce the connected graph $P_7$. $\square$

\begin{claim}\label{claim3}
When $c<0$, $Z$ can be in contact with at most one ordinary path.
\end{claim}
\par Any contact between $Z$ and an ordinary path must use the endpoint of $Z$ other than $q$. An ordinary endpoint lies in a host interior and therefore cannot be an interior point of $Z$; no ordinary path contains $q$. The remaining endpoint of $Z$ cannot meet two ordinary paths, since they and $Z$ would form a triangle in $P_7$. Hence $Z$ has at most one ordinary neighbor.

\par If $b>0$, every ordinary path lies in $y\geq0$, whereas $Z\setminus\{q\}$ lies in $y<0$. There is no contact. If $b<0$, every pair of adjacent ordinary paths lies entirely in $y\leq0$, as we now verify. Their common endpoint cannot be in the horizontal interior of $P$, nor in its vertical interior, since the earlier analysis of the latter possibility forces $b>h$. It cannot be in the horizontal interior of $Q$, which is below every endpoint on $P$. At the bend of $Q$, one of the unused directions points downwards, so a pair of ordinary paths cannot meet there either. The common endpoint must therefore be the bend of $P$ or an interior point $(c,t)$ of the vertical segment of $Q$, with $b<t<0$.

\par In the first case, both ordinary paths have one endpoint at height zero and their other endpoint below it, so both lie in $y\leq0$. In the second case, both other endpoints must be on the horizontal segment of $P$ or at its bend. An endpoint on the vertical interior of $P$ would have to reach $(c,t)$ by a horizontal segment followed by a vertical segment on $x=c$, which would overlap $Q$ between heights $t$ and zero. Thus again all endpoints, and hence both entire paths, have non-positive height. Since $Z\setminus\{q\}$ lies in $y>0$, it cannot touch a vertex of an ordinary edge. It can only join one ordinary isolated vertex. $\square$

\begin{claim}\label{claim4}
 When $c=0$, the contact graph of the ordinary paths consists only of isolated vertices and independent edges, and $Z$ can be in contact with at most one ordinary path.
\end{claim}
\par Since $c=0$, $q$ is the bend of $P$ and $b<0$. In this case ordinary paths are pairwise disjoint. On the horizontal segment of $P$, all ordinary paths must leave downwards, so a pair cannot share an endpoint. On its vertical segment, two paths leaving to opposite sides could reach $Q$ only if its horizontal segment extended to both sides of $x=0$, which it does not. On the horizontal segment of $Q$, they can approach only from above. On its vertical segment, they can approach only from the left: a path starting on the vertical arm of $P$ would need two bends or would overlap that arm or $Q$, whereas a path starting on the horizontal arm reaches it from the left. At the bend of $Q$, the downward unused direction cannot lead to an endpoint on $P$. The remaining host points are $q,r$, which ordinary paths avoid. These observations exclude every possible common host endpoint. Adding $Z$ can therefore create at most one edge. $\square$

\par After deleting the vertices of $B$, Claims \ref{claim1}-\ref{claim4} show that the contact graph of ordinary paths consists only of isolated vertices and independent edges, the induced subgraph of $P_7$ thus has components of order at most two. By \eqref{eq6}, $|B|\leq2$. Deleting at most two adjacent vertices from $P_7$ leaves at most two components. Hence $|V(P_7-B)|\leq2\cdot2 = 4$, and 
\[
7=|B|+|V(P_7-B)|\leq |B|+2+2\leq6,
\]
a contradiction. This proves that $P_v$ has at most one free endpoint, and by symmetry the same holds for $P_w$.
\end{proof}

\par Fig. \ref{fig3}(b) presents a $B_1$-CPG representation of $H_1$ in which $P_v$ has exactly one free endpoint. Let 
\[
P_v=(-2,0)\to(3,0)\to(3,3); P_w=(5,3)\to(2,3)\to(2,1); P_{u_1}=(3,2)\to(4,2)\to(4,3);
\]
\[
P_{u_2}=(3,2)\to(2,2); P_{u_3}=(2,2)\to(1,2)\to(1,0); P_{u_4}=(1,1)\to(2,1)\to(2,0);
\]
\[
P_{u_5}=(2,1)\to(3,1); P_{u_6}=(3,1)\to(5,1)\to(5,4); P_{u_7}=(5,4)\to(3,4)\to(3,3).
\]
The five triple-contact points represent the triangles $vwu_7$, $vu_1u_2$, $wu_2u_3$, $wu_4u_5$, and $vu_5u_6$. The remaining contacts represent $wu_1$, $vu_3$, $vu_4$, $wu_6$, $u_3u_4$, and $u_6u_7$. Their locations are, respectively, $(4,3)$, $(1,0)$, $(2,0)$, $(5,3)$, $(1,1)$, and $(5,4)$. The triple-contact points are, in the same triangle order, $(3,3)$, $(3,2)$, $(2,2)$, $(2,1)$, and $(3,1)$. These contacts give all twenty-one edges. There are no other contacts and no intersections between path interiors. The endpoint $(-2,0)$ of $P_v$ is free, and its leftward continuation ray avoids every path.

\begin{figure}
\centering
  \setlength{\unitlength}{1bp}%
   \begin{picture}(346.03, 308.99)(0,0)
  \put(0,0){\includegraphics{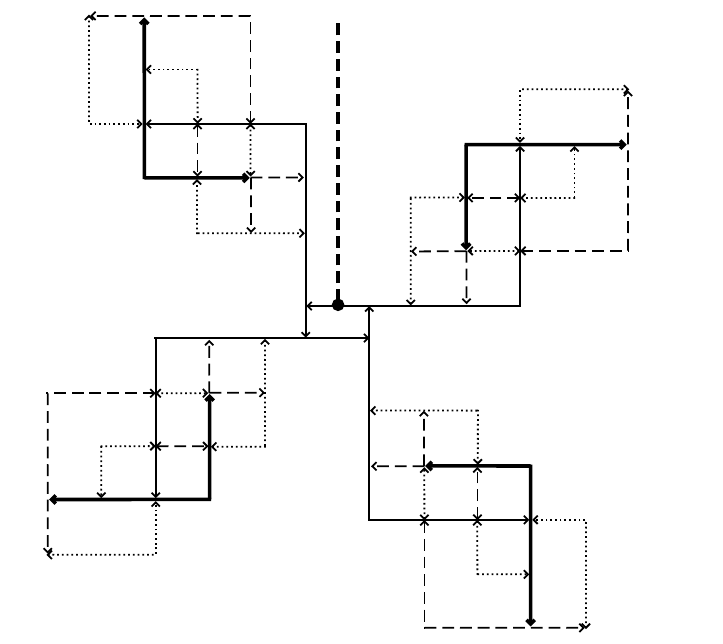}}
  \put(157.67,152.34){\fontsize{9.72}{11.66}\selectfont $p_0$}
  \put(304.63,218.42){\fontsize{9.72}{11.66}\selectfont $H^0_1$}
  \put(283.11,31.66){\fontsize{9.72}{11.66}\selectfont $H^1_1$}
  \put(5.67,84.33){\fontsize{9.72}{11.66}\selectfont $H^2_1$}
  \put(24.52,269.24){\fontsize{9.72}{11.66}\selectfont $H^3_1$}
  \end{picture}%
\caption{A $B_1$-CPG representation of the graph $F$ in which $P_{v_0}$ contains a point $p_0$ from which an upward ray emanates.}
\label{fig4}
\end{figure}

\par For the graph $F$, we choose $v_0$ as its attachment root. This graph is planar; the copies attach at the vertices of a four-cycle.

\begin{lemma}\label{le4.2}
In every $B_1$-CPG representation of $F$, none of the four root paths has a free endpoint. Moreover, $F$ has a $B_1$-CPG representation with an exposed interior point on $P_{v_0}$.
\end{lemma}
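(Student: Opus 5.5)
The plan has two parts: the impossibility statement, which follows from Lemma \ref{le4.1} plus a counting argument on the four-cycle, and the existence statement, which is an explicit construction from the representation in Fig. \ref{fig3}(b).

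\textbf{Impossibility.} Fix a $B_1$-CPG representation of $F$ and restrict it to a copy $H_1^i$. This gives a representation of $H_1$ in which $v_i$ plays the role of $v$. By Lemma \ref{le4.1}, $P_{v_i}$ has at most one endpoint that is free relative to $H_1^i$. I will show that the endpoints of $P_{v_i}$ are also unavailable for the cycle edges $v_{i-1}v_i$ and $v_iv_{i+1}$.

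Suppose an endpoint $p$ of $P_{v_i}$ lies on a path of $H_1^i$ other than $P_{v_i}$. No path of a different copy can contain $p$, since that would create an edge between copies other than a cycle edge at the roots. The only non-copy neighbours of $v_i$ are $v_{i\pm1}$, so $P_{v_{i\pm1}}$ cannot contain $p$ either: that would make $v_{i\pm1}$ adjacent to a non-root vertex of $H_1^i$. Hence each cycle edge at $v_i$ is realized either at the unique $H_1^i$-free endpoint of $P_{v_i}$, or at an endpoint of $P_{v_{i\pm1}}$ lying in the interior of $P_{v_i}$.

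Each of the four cycle edges needs a contact point that is an endpoint of one of its two root paths. That point must be an $H_1^j$-free endpoint of the root path $P_{v_j}$ owning it. By Lemma \ref{le4.1}, each root path has at most one such endpoint, so there are at most four of them in total.

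Can one such endpoint serve two cycle edges? It lies on no other path of its own copy. If it realized both $v_{j-1}v_j$ and $v_jv_{j+1}$, then $P_{v_{j-1}}$ and $P_{v_{j+1}}$ would meet there, creating the chord $v_{j-1}v_{j+1}$, which is absent from $F$. So each $H_1^j$-free endpoint realizes at most one cycle edge. Four edges, at most four such endpoints, each used at most once: every root path must have its unique $H_1^j$-free endpoint consumed by a cycle edge. Therefore no root path has a free endpoint in the representation of $F$.

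I expect the main obstacle to be the points where a copy-internal path and a neighbouring root path both touch an endpoint. These need careful exclusion, as above, using the absence of edges between non-root vertices and other copies.

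\textbf{Existence.} For the second assertion I would give an explicit construction, as in Fig. \ref{fig4}. Take the representation of $H_1$ from Fig. \ref{fig3}(b), in which $P_v$ has the exposed free endpoint $(-2,0)$ with an empty leftward ray.

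Place four rotated and scaled copies of it around a central square. Each root path's free endpoint touches the interior of the next root path's long exposed arm; the representation of $H_1$ should be extended there, if necessary, so the arm is long and far from the other paths. This realizes the cycle $v_0v_1v_2v_3$ with no other contacts, since the copies sit in disjoint regions. The copies should be positioned so that some interior point $p_0$ of $P_{v_0}$, lying on the part of its arm inside the central square, has a perpendicular open ray avoiding all paths. Checking that this ray is unobstructed is a finite verification of the coordinates.
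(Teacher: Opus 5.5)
Your proposal is correct and follows the paper's proof essentially step for step. You apply Lemma~\ref{le4.1} to each restricted copy $H_1^i$, note that a cycle-edge contact must lie at a copy-relative free endpoint of one of its two roots (otherwise a non-root vertex would gain an outside neighbour), and use the missing chords $v_0v_2$, $v_1v_3$ to match the four cycle edges injectively to at most four such endpoints; for existence, the paper likewise arranges four rotated copies of the representation in Fig.~\ref{fig3}(b) as a pinwheel, exhibiting $p_0=(-1,0)$ on the arm of $P_{v_0}$ with an empty upward ray.
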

\begin{proof}
\par Restrict any representation of $F$ to $H_1^i$. By Lemma \ref{le4.1}, $P_{v_i}$ has at most one endpoint free relative to that copy. A contact representing a newly added edge must use such a relative free endpoint. Otherwise the other root path would also meet a non-root path of the copy, giving an edge not in $F$.

\par The four added edges need four distinct such endpoints. Sharing one endpoint between three added edges would make three roots meet at one point and would add a diagonal of the four-cycle. There are at most four eligible endpoints in total, one at each root. All must therefore be used, while the other endpoint of each root is already occupied inside its copy. This proves the universal endpoint assertion.
\par The $B_1$-CPG representation of $H^0_1$ coincides with that of $H_1$, with the left endpoint of $P_{v_0}$ at $(-2,0)$. Rotating three copies of the $B_1$-CPG representation of $H^0_1$ clockwise by $90^\circ$, $180^\circ$, and $270^\circ$, respectively, so that the top endpoint of $P_{v_1}$ is $(0,0)$, the right endpoint of $P_{v_2}$ is $(-1,0)$, and the bottom endpoint of $P_{v_2}$ is $(-2,-1)$, we obtain a $B_1$-CPG representation of $F$ (Fig. \ref{fig4}). The point $p_0$ has coordinates $(-1,0)$, and no path passes through any point with $y>0$ on the column $x=-1$. Hence $P_{v_0}$ contains a point $p_0$ from which an upward ray emanates.
\end{proof}

\subsection{The reduction}

\par Given an instance $\phi$ of \textsc{Planar Exactly 3-Bounded 3-Sat}, with variable set $\mathcal{V}$ and clause set $\mathcal{C}$, we construct a graph $G(\phi)$ which is a planar $B_1$-CPG graph if and only if $\phi$ is satisfiable. \textsc{Planar Exactly 3-Bounded 3-Sat}: each clause has at most three literals, each variable occurs in exactly three clauses, and the variable-clause incidence graph is planar. Its NP-completeness originates in \cite{dahlhaus1994complexity}. We use the normalized instances and terminal assignment of the reduction in \cite{Champseix2021cpg}. In particular, clauses have two or three literals, and there are enough positive and negative variable terminals to assign distinct terminals to the occurrences.

\par Starting from the planar bipartite graph $I(\phi)$ (the variable–clause incidence graph of $\phi$), with $V(I(\phi))=\mathcal{V}\cup\mathcal{C}$ and $E(I(\phi))=\{xc\mid x\in c \text{ or } x\in\neg c\}$, we replace each variable vertex $x$ with a variable gadget $V_x$ (Fig. \ref{Fig2}(a)), and each clause vertex $c$ with a clause gadget $C_c$ (Fig. \ref{Fig2}(b)). In each subfigure, a double-circle represents the end-eating gadget graph $F$. Moreover, for a vertex $t \in V(G(\phi))$, $t$ is incident to a double-circle if and only if $t$ is adjacent to exactly one vertex $v_0$ of $F$.

\begin{figure}
\centering
  \setlength{\unitlength}{1bp}%
    \begin{picture}(448.20, 134.59)(0,0)
  \put(0,0){\includegraphics{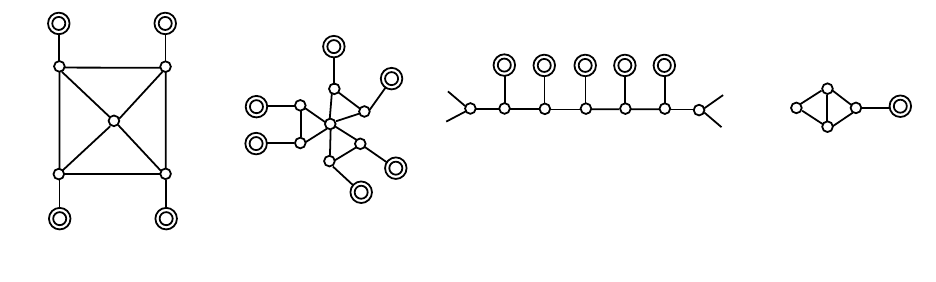}}
  \put(17.07,48.54){\fontsize{9.72}{11.66}\selectfont $d$}
  \put(82.27,48.93){\fontsize{9.72}{11.66}\selectfont $c$}
  \put(17.50,100.26){\rotatebox{0.00}{\fontsize{9.72}{11.66}\selectfont \smash{\makebox[0pt][l]{$a$}}}}
  \put(82.27,100.05){\fontsize{9.72}{11.66}\selectfont $b$}
  \put(51.78,81.43){\fontsize{9.72}{11.66}\selectfont $k$}
  \put(37.70,8.21){\fontsize{9.72}{11.66}\selectfont (a) $V_x$}
  \put(143.86,7.80){\fontsize{9.72}{11.66}\selectfont (b) $C_c$}
  \put(140.36,89.43){\fontsize{9.72}{11.66}\selectfont $p_1$}
  \put(140.42,56.36){\fontsize{9.72}{11.66}\selectfont $q_1$}
  \put(176.50,64.50){\fontsize{9.72}{11.66}\selectfont $q_2$}
  \put(162.51,93.23){\fontsize{9.72}{11.66}\selectfont $q_3$}
  \put(178.20,75.75){\fontsize{9.72}{11.66}\selectfont $p_3$}
  \put(151.43,48.57){\fontsize{9.72}{11.66}\selectfont $p_2$}
  \put(266.78,7.76){\fontsize{9.72}{11.66}\selectfont (c) $L_{x,c}$}
  \put(222.70,72.37){\fontsize{9.72}{11.66}\selectfont $t_{xc}$}
  \put(325.91,72.86){\fontsize{9.72}{11.66}\selectfont $t_{cx}$}
  \put(371.49,80.24){\fontsize{9.72}{11.66}\selectfont $o$}
  \put(360.00,7.76){\fontsize{9.72}{11.66}\selectfont (d)~False~terminator}
  \put(409.56,73.58){\fontsize{9.72}{11.66}\selectfont $t_f$}
  \put(50.08,104.33){\rotatebox{0.00}{\fontsize{9.72}{11.66}\selectfont \smash{\makebox[0pt][l]{$x$}}}}
  \put(50.08,40.96){\rotatebox{0.00}{\fontsize{9.72}{11.66}\selectfont \smash{\makebox[0pt][l]{$x$}}}}
  \put(81.16,76.49){\rotatebox{0.00}{\fontsize{9.72}{11.66}\selectfont \smash{\makebox[0pt][l]{$\neg x$}}}}
  \put(5.67,76.49){\rotatebox{0.00}{\fontsize{9.72}{11.66}\selectfont \smash{\makebox[0pt][l]{$\neg x$}}}}
  \end{picture}%
  \caption{(a) The variable gadget $V_x$. (b) The clause gadget $C_c$. (c) The connector $L_{xc}$. (d) The false terminator $t_f$.}
  \label{Fig2}
\end{figure}

\par For each variable $V_x$, take a wheel with center $k$ and rim $a,b,c,d,a$. Its negative terminals are $(b,c)$ and $(d,a)$, and its positive terminals are $(a,b)$ and $(c,d)$. For each clause $C_c$, take a vertex $o$ joined to the six endpoints of three independent edges $p_iq_i$, $1\leq i\leq3$. The three pairs $(p_i,q_i)$ are its terminals.
\par We replace each edge $xc$ in $I(\phi)$ with a connector $L_{x,c}$ (Fig. \ref{Fig2}(c)). The vertex $t_{xc}$ is linked to a positive or negative terminal of $V_x$, depending on whether $x$ occurs positively or negatively in $c$; and the vertex $t_{cx}$ is linked to a terminal of $C_c$. The underlying path of $L_{xc}$ has vertices $t_{xc},z_1,\dots,z_5,t_{cx}$ in this order. We carry out these transformations so that no terminal is used twice. Finally, if a clause $c$ contains only two literals, we link the last free terminal to a false terminator (Fig. \ref{Fig2}(d)).

\par In Section 4.2 of \cite{Champseix2021cpg}, $G_1(\phi)$ denotes our graph $G(\phi)$ in which every double-circle is the non-planar end-eating gadget $\varepsilon_1$ rather than the planar graph $F$. Consequently, deleting all double-circles from $G_1(\phi)$ and $G(\phi)$; we denote this resulting graph by $D(\phi)$. Fig. \ref{fig5}(b) shows an example $D(\phi)$ for $\phi=c_1\wedge c_2\wedge c_3=(x\vee y)\wedge(x \vee\neg y)\wedge(\neg x\vee y)$, where the black vertices are either variable terminals or clause terminals. Moreover, in \cite[Lemma~8]{Champseix2021cpg}, the vertices of the gadget $\varepsilon_1$ that are adjacent to vertices of $V(D(\phi))$ have $B_1$-CPG representations with no free endpoint. This property coincides with the property that, in every $B_1$-CPG representation of $F$, the path $P_{v_0}$ has no free endpoint, by Lemma \ref{le4.2}. We may therefore replace all $B_1$-CPG representations of $\varepsilon_1$ in a $B_1$-CPG representation of $G_1(\phi)$ by the $B_1$-CPG representation of $F$, thereby obtaining a $B_1$-CPG representation of $G(\phi)$.
\par Moreover, we observe that the vertices of $D(\phi)$ that are adjacent to $v_0$ of $F$ in $G(\phi)$ are exactly the following: the four rim vertices $a,b,c,d$ in each variable gadget; the vertices $p_i,q_i$ for $1\leq i\leq 3$ in each clause gadget; each false terminator $t_f$; and the vertices $z_1,z_2,\dots,z_5$ in each connector $L_{x,c}$. We denote by $S(\phi)$ the set of these vertices.

\par Figures 15 and 16 in \cite{Champseix2021cpg} provide $B_1$-CPG representations of $V_x$, in which the paths representing $a,b,c,d$ each have a free endpoint; Figure 17 provides a $B_1$-CPG representation of $C_c$, in which the paths representing $p_i$ and $q_i$ $(1\leq i\leq 3)$ each have a free endpoint.

\begin{figure}
\centering
  \setlength{\unitlength}{1bp}%
  \begin{picture}(466.00, 244.28)(0,0)
  \put(0,0){\includegraphics{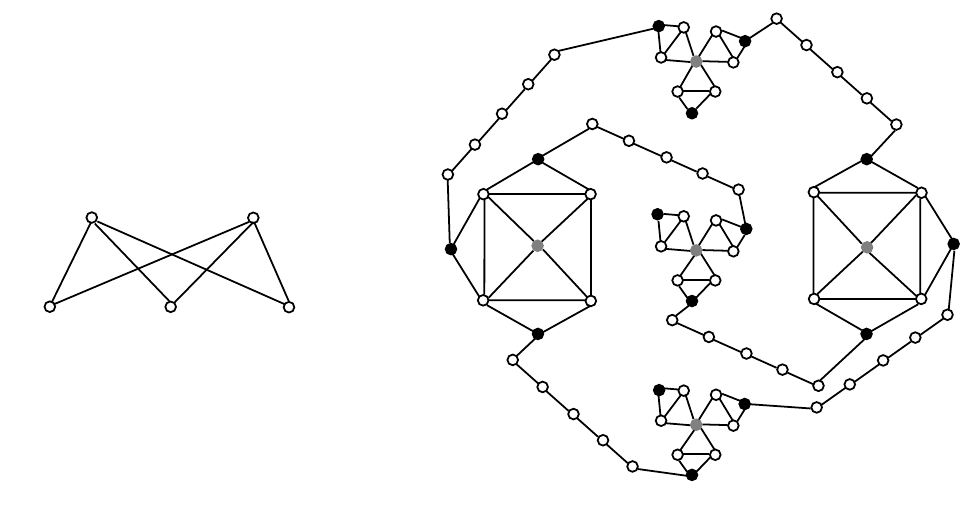}}
  \put(5.67,87.03){\fontsize{9.72}{11.66}\selectfont $c_1=x\vee y$}
  \put(58.50,87.03){\fontsize{9.72}{11.66}\selectfont $c_2=x\vee \neg y$}
  \put(38.65,146.39){\fontsize{9.72}{11.66}\selectfont $x$}
  \put(116.20,146.00){\fontsize{9.72}{11.66}\selectfont $y$}
  \put(120.96,86.40){\fontsize{9.72}{11.66}\selectfont $c_3=\neg x\vee y$}
  \put(58.91,56.73){\fontsize{9.72}{11.66}\selectfont (a) $I(\phi)$}
  \put(341.36,7.76){\fontsize{9.72}{11.66}\selectfont (b) $D(\phi)$}
  \end{picture}%
\caption{(a) The variable-clause incidence graph of $\phi=c_1\wedge c_2\wedge c_3$. (b) The graph $D(\phi)$. When every white vertex and every degree-two black vertex is adjacent to the vertex $v_0$ of a copy of $F$, the graph $G(\phi)$ is obtained. In the variable gadget, the upper and lower variable terminals represent the positive literals of $x$ and $y$, while the remaining variable terminals represent the negative literals of $x$ and $y$.}
\label{fig5}
\end{figure}

\par For completeness, we recall the statement of \cite[Lemma 9]{Champseix2021cpg},  in the form needed here: given an instance $\phi$ of \textsc{Planar Exactly 3-Bounded 3-Sat}, the graph $G_1(\phi)$ obtained from $D(\phi)$ by attaching a copy of the non-planar end-eating gadget $\varepsilon_1$ at every vertex of $S(\phi)$ is $B_1$-CPG if and only if $\phi$ is satisfiable. We obtain the following lemma.

\begin{lemma}\label{le4.3}
$\phi$ is satisfiable if and only if $D(\phi)$ admits a $B_1$-CPG representation in which $P_t$ has a free endpoint for every $t\in S(\phi)$.
\end{lemma}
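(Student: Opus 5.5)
The plan is to derive Lemma~\ref{le4.3} from \cite[Lemma 9]{Champseix2021cpg} by applying the gluing Lemma~\ref{le2.2} twice, once with the non-planar gadget $\varepsilon_1$ and once with $F$. Both attachments are made at the same vertex set $S(\phi)\subseteq V(D(\phi))$. The common intermediate statement is this: for a fixed attached gadget $J$, the glued graph is $B_1$-CPG if and only if $D(\phi)$ has a representation in which every $P_t$, $t\in S(\phi)$, has a free endpoint. Once this holds for $J=\varepsilon_1$, the lemma follows from \cite[Lemma 9]{Champseix2021cpg}.

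\emph{Checking the hypotheses for $J=\varepsilon_1$ with attachment vertex $s$.} First, in every $B_1$-CPG representation of $\varepsilon_1$, $P_s$ has no free endpoint; this is \cite[Lemma 8]{Champseix2021cpg}, as recalled before the lemma. Second, $\varepsilon_1$ has a representation with an exposed interior point on $P_s$. For this I would cite the representation drawn in \cite{Champseix2021cpg}. Failing an explicit exposed point there, I would argue directly: take any representation, pick a point in the interior of a segment of $P_s$ avoided by other paths, and look for a perpendicular ray avoiding all paths. If none exists, I would modify the drawing locally near an endpoint-free part of the outer boundary.

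\emph{The two directions.} If $\phi$ is satisfiable, $G_1(\phi)$ is $B_1$-CPG by \cite[Lemma 9]{Champseix2021cpg}. The "only if" argument of Lemma~\ref{le2.2} uses only the no-free-endpoint property, and it shows that deleting all copies of $\varepsilon_1$ leaves a free endpoint on each $P_t$. Concretely, the edge $ts$ must be realized by an endpoint of $P_t$ lying in the interior of $P_s$, and no other path of $D(\phi)$ passes through that point. Conversely, given such a representation of $D(\phi)$, I would insert scaled copies of the exposed representation of $\varepsilon_1$ at the free endpoints, as in the "if" part of Lemma~\ref{le2.2}. This yields a $B_1$-CPG representation of $G_1(\phi)$, and \cite[Lemma 9]{Champseix2021cpg} then gives satisfiability.

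\emph{Main obstacle.} The delicate point is confirming that $\varepsilon_1$ really has an exposed interior point on its attachment path, and that the vertex sets match: the vertices of $D(\phi)$ carrying gadgets in \cite{Champseix2021cpg} must be exactly $S(\phi)$. I would check this against the construction in Section 4.2 of \cite{Champseix2021cpg}. If the exposed-point condition is awkward to verify, I would bypass the converse for $\varepsilon_1$ altogether. Satisfiability implies a representation of $D(\phi)$ with the required free endpoints, using the explicit representations of $V_x$ and $C_c$ (Figures 15–17 of \cite{Champseix2021cpg}) assembled exactly as in their proof of Lemma~9, since that proof builds such a representation of $D(\phi)$ before attaching gadgets. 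Only the forward direction would then rely on \cite[Lemma 8]{Champseix2021cpg}.
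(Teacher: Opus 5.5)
Your main route (Lemma~\ref{le2.2} with $J=\varepsilon_1$ at the vertices of $S(\phi)$, the no-free-endpoint hypothesis from \cite[Lemma 8]{Champseix2021cpg}, the exposed interior point from the gadget representation in \cite{Champseix2021cpg}, then \cite[Lemma 9]{Champseix2021cpg}) is the derivation the paper leaves implicit when it restates Lemma~9 and says the lemma follows, and it is correct; it is in fact more careful than the paper, which never addresses the exposed-point hypothesis for $\varepsilon_1$, though your ``modify the drawing locally'' sketch is not an argument, so the citation has to carry that step. One caution: your fallback targets the wrong direction, since the exposed point is used only to pass from a free-endpoint representation of $D(\phi)$ to a representation of $G_1(\phi)$ and hence to satisfiability, so building a representation of $D(\phi)$ from a satisfying assignment leaves that dependence untouched; a genuine bypass would have to rework the converse half of the \emph{proof} of \cite[Lemma 9]{Champseix2021cpg}, not its statement.
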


\par From the properties of $B_1$-CPG representations of $F$, we obtain the following result.

\begin{theorem}\label{th4.1}
Recognizing planar $B_1$-CPG graphs is NP-complete even for graphs of maximum degree at most 11.
\end{theorem}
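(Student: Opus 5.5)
Membership in NP is immediate from Lemma \ref{le2.1} with $k=1$, so the work is NP-hardness. We reduce from \textsc{Planar Exactly 3-Bounded 3-Sat}, mapping $\phi$ to $G(\phi)$. By construction, $G(\phi)$ is $D(\phi)$ with one copy of $F$ attached at every $t\in S(\phi)$ via the edge $tv_0$. This is exactly the operation of Lemma \ref{le2.2} with $k=1$, $D=D(\phi)$, $S=S(\phi)$, $J=F$ and $s=v_0$.

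The hypotheses of Lemma \ref{le2.2} come from Lemma \ref{le4.2}. First, $F$ is $B_1$-CPG (Fig. \ref{fig4}). Second, in every $B_1$-CPG representation of $F$, $P_{v_0}$ has no free endpoint. Third, some representation of $F$ has an exposed interior point $p_0$ on $P_{v_0}$, from which the upward open ray avoids all paths. We should also check that $p_0$ lies in the interior of a straight segment of $P_{v_0}$ and belongs to no other path; the coordinates given with Fig. \ref{fig4} confirm this. Lemma \ref{le2.2} then says that $G(\phi)$ is $B_1$-CPG if and only if $D(\phi)$ has a $B_1$-CPG representation in which every $P_t$, $t\in S(\phi)$, has a free endpoint. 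By Lemma \ref{le4.3}, this holds if and only if $\phi$ is satisfiable. The construction is clearly polynomial: $F$ has constant size, $|S(\phi)|$ is linear, and the gadgets are of constant size.

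Planarity follows from two facts. $D(\phi)$ is planar, since it arises from the planar incidence graph $I(\phi)$ by replacing vertices with planar gadgets (wheels, clause stars on matchings) and edges with paths, respecting the rotation system as in \cite{Champseix2021cpg}. $F$ is planar, being four copies of the planar $H_1=K_2\vee P_7$ joined along a 4-cycle at roots that can be drawn on outer faces. The last clause of Lemma \ref{le2.2} then gives planarity of $G(\phi)$.

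The main obstacle is the degree bound of $11$, which needs a careful count. Inside $F$, a root $v_i$ of $H_1^i$ has degree $8$ in $H_1$, plus $2$ from the 4-cycle, so $10$; the root $v_0$ gets one more edge, giving $11$. The other vertices of $F$ have degree at most $8$ ($w$) or at most $4$ (the $u_j$). In $D(\phi)$, a wheel center has degree $4$ and a rim vertex has degree $3$ plus connector links. A clause vertex $o$ has degree $6$, and each $p_i,q_i$ has degree $2$ plus links. Each $z_j$ has degree $2$. Each vertex of $S(\phi)$ gets one extra edge to its copy of $F$. We must check from the terminal-assignment convention of \cite{Champseix2021cpg} that each terminal vertex receives at most a bounded number of connector edges (each terminal is used once, and a rim vertex lies in two terminals), so all degrees in $D(\phi)$ plus one stay well below $11$. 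Hence $\Delta(G(\phi))=11$, attained at each $v_0$, and the theorem follows.
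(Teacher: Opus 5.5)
Your proposal is correct and follows the paper's proof. It gets NP membership from Lemma~\ref{le2.1} and hardness by instantiating Lemma~\ref{le2.2} with $J=F$, $s=v_0$ (hypotheses from Lemma~\ref{le4.2}) and chaining with Lemma~\ref{le4.3}; it takes planarity from the gluing clause and uses the same degree count with maximum $11$ at each attachment root, and your bound on degrees inside $D(\phi)$ is actually more explicit than the paper's. One small caution: rely on the statement of Lemma~\ref{le4.2} for the exposed point rather than on the printed coordinates, since as literally written they place the right endpoint of $P_{v_2}$ at $(-1,0)=p_0$ (evidently a typo), so they do not confirm it verbatim.
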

\begin{proof}
\par Given a formula $\phi$, we construct the planar graph $G(\phi)$ in polynomial time. If $\phi$ is satisfiable, Lemma \ref{le4.3} gives a $B_1$-CPG representation of $D(\phi)$ with all required free endpoints. Lemmas \ref{le2.2} and \ref{le4.2} attach the copies of $F$ without increasing the bend count. Conversely, deleting the copies from any $B_1$-CPG representation of $G(\phi)$ yields a $B_1$-CPG representation of $D(\phi)$ in which $P_t$ has a free endpoint for every $t\in S(\phi)$. Lemma \ref{le4.3} then makes $\phi$ satisfiable.

\par Each universal vertex of $H_1$ has degree eight. Adding the root four-cycle makes the four roots have degree ten, and the attachment root has degree eleven after the edge to the core is added (an example of $G(\phi)$ is obtained from Fig. \ref{fig5}(b) by joining every white vertex and every degree two black vertex to the vertex $v_0$ of a copy of $F$). Other vertices of a copy have degree at most eight. Thus 
\[
\Delta(G(\phi))=11.
\]
Membership in NP follows from Lemma \ref{le2.1}.
\end{proof}

\section{Conclusion}
\par In this paper, we resolve the complexity of recognizing planar $B_k$-CPG graphs for $k\in\{0,1\}$, a problem posed by Champseix et al. \cite{Champseix2021cpg}. We achieve the recognition of planar $B_0$-CPG graphs by a reduction from the HV-Rectilinear Planarity problem for planar subcubic graphs. For planar $B_1$-CPG graphs, we start from the graphs constructed in \cite{Champseix2021cpg} for recognizing $B_1$-CPG graphs, modify the end-eating gadget, and thereby obtain a planar $B_1$-CPG graph.

\par The case $k=2$ remains open. The reduction of Section 4 would extend directly to two bends if a suitable endpoint gadget were available. Specifically, suppose that there exists a planar graph $F_2$ such that, in every $B_2$-CPG representation of $F_2$, some vertex $v$ has a path $P_v$ with no free endpoint and contains an exposed interior point. Then replacing the end-eating gadget $F$ in Section 4 by $F_2$ would reduce \textsc{Planar Exactly 3-Bounded 3-Sat} to recognizing planar $B_2$-CPG graphs, exactly as in the one-bend case. The remaining obstacle is therefore the construction of such a graph $F_2$.

\vspace{6mm}
\noindent\textbf{Acknowledgements} This work was funded in part by the Scientific Research Projects of the Department of Education of Anhui Province (Grants No. 2025AHGXZK40333).
    
\vspace{2mm}
\noindent\textbf{Data Availability} Data sharing is not applicable to this article as no datasets were generated or analyzed during the current study.
	
\vspace{2mm}
	
\noindent\textbf{Conflict of interest} The authors have no relevant financial or non-financial interests to disclose.

\end{document}